\theoremstyle{plain}
\newtheorem{theorem}{Theorem}[section]
\newtheorem{cor}[theorem]{Corollary}
\newtheorem{lemma}[theorem]{Lemma}
\newtheorem{prop}[theorem]{Proposition}
\theoremstyle{remark}
\newtheorem{definition}[theorem]{Definition}
\newtheorem{rem}[theorem]{Remark}
\newtheorem{example}{Example}
\newcommand{\diam}{{\rm diam}}
\newcommand{\N}{\mathbb{N}}
\newcommand{\R}{\mathbb{R}}
\newcommand{\Z}{\mathbb{Z}}
\newcommand{\E}{\mathbb{E}}
\newcommand{\mmp}{\mathbb{P}}
\DeclareMathOperator{\ind}{\mathbbm{1}}
\newcommand{\tovague}{\overset{{\rm v}}{\underset{d\to\infty}\longrightarrow}}
\newcommand{\conv}{\mathop{\mathrm{conv}}}
\newcommand{\oconv}{\mathop{\overline{\mathrm{conv}}}}
\begin{document}
\title[Random Walks in the High-Dimensional Limit]{Random Walks in the High-Dimensional Limit II:\\The Crinkled Subordinator}

\author{Zakhar Kabluchko}
\address{Zakhar Kabluchko: Institut f\"ur Mathematische Stochastik,
Westf\"alische Wilhelms-Universit\"at M\"unster, M\"unster, Germany
}
\email{zakhar.kabluchko@uni-muenster.de}

\author{Alexander Marynych}
\address{Alexander Marynych: Faculty of Computer Science and Cybernetics, Taras Shevchenko National University of Kyiv}
\email{marynych@unicyb.kiev.ua}

\author{Kilian Raschel}
\address{Kilian Raschel:
Laboratoire Angevin de Recherche en Math\'{e}matiques, Universit\'{e} d’Angers, CNRS, Angers, France
}
\email{raschel@math.cnrs.fr}

\begin{abstract}
A crinkled subordinator is an $\ell^2$-valued random process which can be thought of as a version of the usual one-dimensional subordinator with each out of countably many jumps being in a direction orthogonal to the directions of all other jumps. We show that the path of a $d$-dimensional random walk with $n$ independent identically distributed steps with heavy-tailed distribution of the radial components and asymptotically orthogonal angular components
converges in distribution in the Hausdorff distance up to isometry and also in the Gromov--Hausdorff sense, if viewed as a random metric space, to the closed range of a crinkled subordinator, as $d,n\to\infty$.
\end{abstract}


\keywords{crinkled arc, Gromov--Hausdorff convergence, Hausdorff distance up to isometry, high-dimensional limit, random metric space, random walk, subordinator, Wiener spiral}

\subjclass[2020]{Primary: 60F05,60G50; Secondary: 60D05,60G51}

\maketitle

\section{Introduction}\label{subsec:introduction}
Let $\ell^2$ be the infinite-dimensional (real)\ Hilbert space of square-summable sequences endowed with the standard Hilbert norm
\begin{equation*}
   \|(x_1,x_2,\ldots)\|_{2}:=\sqrt{\sum_{k=1}^{\infty} x_k^2},\quad  (x_1,x_2,\ldots)\in\ell^2,
\end{equation*}
and the standard inner product
\begin{equation*}
   \langle (x_1,x_2,\ldots),(y_1,y_2,\ldots)\rangle_2:=\sum_{k=1}^{\infty} x_k y_k,\quad (x_1,x_2,\ldots),\quad (y_1,y_2,\ldots)\in\ell^2.
\end{equation*}
Fix the standard  orthonormal basis $(e_k)_{k\in\N}$ of $\ell^2$ and consider the natural embeddings
\begin{equation*}
   \R\subset \R^2\subset\cdots\subset\R^d \subset\cdots \subset\ell^2,
\end{equation*}
obtained by identifying $\R^d$ with the linear span of
$(e_1,e_2,\ldots,e_d)$, $d\in\N$. This will allow us throughout the paper to treat elements of $\R^d$ as elements of $\ell^2$ and use the notation $\|x\|_2$ (respectively, $\langle x,y\rangle_2$) for the usual Euclidean norm of $x\in\R^d$ (respectively, the standard inner product of $x,y\in \R^d$). Denote by $\rho_2(x,y):=\|x-y\|_2$ the metric on $\ell^2$ induced by the norm $\|\cdot\|_2$.

Let $(X^{(d)})_{d\in\N}$ be a sequence of random variables defined on a common probability space $(\Omega,\mathcal{F},\mathbb{P})$ such that $X^{(d)}$ takes values in $\R^d$ (identified with the linear span of
$(e_1,e_2,\ldots,e_d)$ in $\ell^2$), $d\in\N$. Assume that the space $(\Omega,\mathcal{F},\mathbb{P})$ is reach enough to accommodate a sequence $(X_i^{(d)})_{i\in\N}$ of independent  copies of $X^{(d)}$, for each $d\in\N$. Consider a family of random walks defined via
\begin{equation}\label{eq:rw_def}
S^{(d)}_0:=0,\quad S^{(d)}_k:=X^{(d)}_1+X^{(d)}_2+\cdots+X^{(d)}_k,\quad k\in\N,
\end{equation}
for each $d\in\N$. Let $n=n(d)$ be an arbitrary sequence of positive integers such that $n(d)\to \infty$ as $d\to\infty$. By default, the notation $d\to\infty$  implies that also $n= n(d)\to\infty$. Denote by $\widehat{S}^{(d)}_n$ the piecewise-linear interpolation obtained by joining the consecutive points $S^{(d)}_0,S^{(d)}_1,\ldots,S^{(d)}_n$ by line segments. By construction, every $\widehat{S}^{(d)}_n$ can be regarded as a continuous piecewise-linear curve in $\ell^2$ starting at the origin and living in the finite-dimensional subspace $\R^d$, $d\in\N$.

This paper is a continuation of~\cite{KabMar:2022} and devoted to finding an answer to the question: How does the curve $\widehat{S}^{(d)}_n$ (after an appropriate renormalization and up to isometries in $\ell^2$) look like when $d$ and, therefore, $n$ tend to infinity? Does it approach some deterministic or random curve in $\ell^2$? Under the assumptions $\E X^{(d)}=0$, $\E \|X^{(d)}\|^2=1$ and the components of $X^{(d)}$ are uncorrelated (plus some mild technical assumptions), it was proved in~\cite{KabMar:2022} that $(\widehat{S}^{(d)}_n/\sqrt{n},\rho_2)$, regarded as a compact metric space, converges in probability in the Hausdorff distance up to isometry and also in the Gromov--Hausdorff sense, see Section~\ref{sec:GH_convergence} below for the definitions, to a deterministic metric space called the Wiener spiral. An isometric copy of the Wiener Spiral in the space $\ell^2$ is given by a continuous curve $\mathbb{W}:=(\widetilde{w}_t)_{t\in [0,1]}$, where
\begin{equation}\label{eq:wiener_spiral_l2}
\widetilde{w}_t:=\frac{2\sqrt{2}}{\pi}\sum_{k=1}^{\infty}\frac{\sin(\pi(k-1/2)t)}{2k-1}e_k,\quad t\in [0,1],
\end{equation}
which possesses peculiar properties
\begin{equation}\label{eq:wiener_metric}
\|\widetilde{w}_t-\widetilde{w}_s\|_2=\sqrt{|t-s|},\quad 0\leq s,t\leq 1,
\end{equation}
and also
\begin{equation}\label{eq:wiener_orthogonal}
\langle \widetilde{w}_t-\widetilde{w}_s,\widetilde{w}_u-\widetilde{w}_v\rangle_2=0,\quad 0\leq v\leq u\leq s\leq t\leq 1.
\end{equation}
Note that~\eqref{eq:wiener_metric} means that, as a metric space, $(\mathbb{W},\rho_2)$ is isometric to the interval $[0,1]$ endowed with the distance $\sqrt{|t-s|}$. The Wiener spiral is also isometric to a continuous curve $(w_t)_{t\in [0,1]}$ in $L_2([0,1])$, given by $w_t=\ind_{[0,t]}(\cdot)\in L_2([0,1])$, $t\in [0,1]$.  This can be seen by noting that
\begin{equation*}
   \langle w_t,w_s\rangle_{L_2([0,1])}=\min(t,s)=\langle \widetilde{w}_t,\widetilde{w}_s\rangle_{2}.
\end{equation*}
It is worth mentioning that replacing $e_k$ in~\eqref{eq:wiener_spiral_l2} by ${\rm N}_k$, where $({\rm N}_k)_{k\in\N}$ are independent  identically  distributed (i.i.d.)\ standard normal random variables, gives the Karhunen--Lo\'{e}ve expansion of a  standard Brownian motion.

The aforementioned result of~\cite{KabMar:2022} can be compared with the classical functional weak law of large numbers for one-dimensional random walks. Recall that the latter tells us that finiteness of the first moment of a generic step implies uniform convergence of the path of the rescaled random walk to a deterministic linear function in probability; see \cite{Glynn+Whitt:1988} for example. In the setting of~\cite{KabMar:2022}, the authors show that the finiteness of $\E\|X^{(d)}\|^2=1$ also implies convergence to a deterministic limit. A natural question arising from this comparison is the following.
It is known that if a generic step of a one-dimensional random walk is a.s.\ positive, has infinite mean and its distribution has a regularly varying at infinity tail, then the path of the rescaled random walk converges to a random limit being the path of a subordinator. Thus, suppose now that the distribution of $\|X^{(d)}\|^2$ is regularly varying at infinity. Keeping in mind the above analogy with one-dimensional random walks, it is natural to expect that in this scenario, $\widehat{S}^{(d)}_n$ converges as $d\to\infty$, after an appropriate rescaling, to a genuinely random curve in $\ell^2$, a path of a certain $\ell^2$-valued random process derived from a subordinator. The main result of our paper confirms these expectations.

\section{Assumptions, definitions and main results}

\subsection{Assumptions}\label{sec:assumptions}

We shall now present our assumptions on the distributions of $(X^{(d)})_{d\in\N}$ which will be used throughout the paper. The components of the vectors $X_i^{(d)}$ (independent copies of $X^{(d)}$), and $S_{i}^{(d)}$ are denoted by $X_i^{(d)} = (X_{i,1}^{(d)},\dots,X_{i,d}^{(d)})$ and $S_i^{(d)} = (S_{i,1}^{(d)},\dots, S_{i,d}^{(d)})$, respectively. Furthermore, let
\begin{equation*}
   \Theta^{(d)}:=\frac{X^{(d)}}{\|X^{(d)}\|_2},\quad\Theta^{(d)}_i:=\frac{X^{(d)}_i}{\|X^{(d)}_i\|_2},\quad i\in\N,\quad d\in\N,
\end{equation*}
denote the angular components of $X^{(d)}$ and $X^{(d)}_i$'s.

Suppose that the following hypotheses hold:
\begin{enumerate}[label={\rm(\alph{*})},ref=(\alph{*})]
\item\label{it(a)}There exist constants $(a(k))_{k\in\mathbb{N}}$ and a L\'{e}vy measure $\nu$ on $(0,\infty]$ satisfying
\begin{equation}\label{eq:nu_integrable}
\int_{(0,\infty)}\min(1,x)\nu({\rm d}x)<\infty,\quad \nu({\infty})=0,
\end{equation}
and such that
\begin{equation}\label{eq:regular_variation_gen}
n\mmp\{(a(n))^{-1}\|X^{(d)}\|_2^2\in\cdot\}\tovague \nu(\cdot),
\end{equation}
where $\tovague$ stands for the vague convergence of measures on $(0,\infty]$. Suppose, further, that
\begin{equation}\label{eq:regular_variation_trunc_sec_moment}
\lim_{s\to 0+}\limsup_{d\to\infty}\frac{n}{a(n)}\mathbb{E}\left(\|X^{(d)}\|_2^2\ind_{\{\|X^{(d)}\|_2^2\leq s a(n)\}}\right)=0.
\end{equation}

\item\label{it(b)}With the sequence $(a(k))_{k\in\mathbb{N}}$ defined in part~\ref{it(a)}, the following relation holds true, for all fixed $s>0$ and $\varepsilon>0$:
\begin{equation}\label{eq:assump_orthogonal}
\lim_{d\to\infty}\mmp\left\{\left|\left\langle \Theta^{(d)}_1,\Theta^{(d)}_2\right\rangle_2\right|>\varepsilon\;\Big|\; \|X_1^{(d)}\|_2^2\geq s a(n),\|X_2^{(d)}\|_2^2\geq s a(n)\right\}=0.
\end{equation}

\item\label{it(c)}With the sequence $(a(k))_{k\in\mathbb{N}}$ defined in part~\ref{it(a)}:
\begin{equation}\label{eq:centering_is_negligible}
\lim_{s\to 0+}\limsup_{d\to\infty}\frac{n}{\sqrt{a(n)}}\|\E X^{(d)}\ind_{\{\|X^{(d)}\|_2^2 \leq s a(n)\}}\|_2=0.
\end{equation}
\end{enumerate}
As we shall now see, there are several cases in which the assumptions~\ref{it(a)}--\ref{it(c)} can be significantly simplified.

\vspace{2mm}
\noindent
{\sc Identically distributed $\|X^{(d)}\|^2$, $d\in\mathbb{N}$.} Under the assumption that the distribution of $\|X^{(d)}\|^2$ does not depend on $d$, condition~\eqref{eq:regular_variation_gen} is equivalent to the following one. There exist $\alpha\in (0,1)$ and a function $L$ slowly varying at infinity such that, for all $d\in\N$,
\begin{equation}\label{eq:regular_variation1}
\mmp\{\|X^{(d)}\|^2>t\}=t^{-\alpha}L(t),\quad t>0.
\end{equation}
In this case $\nu(x,\infty)=x^{-\alpha}$ for all $x>0$, and $(a(k))_{k\in\mathbb{N}}$ can be any positive sequence satisfying
\begin{equation*}
   \lim_{n\to\infty}n\mmp\{\|X^{(d)}\|^2>a(n)\}=\lim_{n\to\infty}\frac{nL(a(n))}{a(n)^{\alpha}}=1.
\end{equation*}
The existence of such a sequence follows by a standard argument involving de Bruijn conjugates; see~Chapter~1.5.7 in \cite{Bingham+Goldie+Teugels:1989}. Note that the restriction $\alpha\in (0,1)$ comes from the fact that we require the L\'{e}vy measure to satisfy the integrability condition~\eqref{eq:nu_integrable}. Furthermore, in this case the condition~\eqref{eq:regular_variation_trunc_sec_moment} holds automatically; see~\eqref{eq:regular_variation_trunc_sec_moment_verify} below.

\vspace{2mm}
\noindent
{\sc Independent radial and angular components of $X^{(d)}$.} The conditions in parts~\ref{it(b)} and \ref{it(c)} take a particularly  simple form if the radial and angular components of $X^{(d)}$ are independent. More precisely, if $\|X^{(d)}\|_2$ and $\Theta^{(d)}$ are independent, then~\eqref{eq:assump_orthogonal} is equivalent to
\begin{equation}\label{eq:assump_orthogonal_indep}
\left\langle \Theta_1^{(d)},\Theta_2^{(d)}\right\rangle_2 \overset{\mmp}{\longrightarrow} 0,\quad d\to\infty.
\end{equation}
That is to say, the condition of part~\ref{it(b)} is equivalent to saying that two independent copies of $X^{(d)}$ are asymptotically orthogonal in probability, as $d\to\infty$. If also $\E \Theta^{(d)}=0$, then~\eqref{eq:centering_is_negligible} holds automatically, since the truncated expectation in~\eqref{eq:centering_is_negligible} is equal to zero for independent $\|X^{(d)}\|_2$ and $\Theta^{(d)}$.

\vspace{2mm}
\noindent
{\sc Symmetric distribution of $X^{(d)}$.} Assume that the law of $X^{(d)}$ is the same as that of $-X^{(d)}$, for all $d\in\N$. In this case the truncated expectation in~\eqref{eq:centering_is_negligible} is equal to zero, since the function of $X^{(d)}$ under $\E$ in~\eqref{eq:centering_is_negligible} is odd. Thus, \ref{it(c)} holds automatically.

\subsection{The crinkled subordinator}

Fix $T>0$. It is known, see Theorem 7.1 on p.~214 in~\cite{Resnick:2007}, that the assumptions in part~\ref{it(a)} imply
\begin{equation}\label{eq:FLT}
\left(\frac{\|X_1^{(d)}\|^2+\|X_2^{(d)}\|^2+\cdots+\|X_{\lfloor nt\rfloor}^{(d)}\|^2}{a(n)}\right)_{t\in [0,T]}\Longrightarrow (\mathcal{S}_{\nu}(t))_{t\in [0,T]},\quad n\to\infty,
\end{equation}
in the Skorokhdod space of c\`{a}dl\`{a}g functions defined on $[0,T]$ and endowed with the Skorokhod $J_1$-topology. Here, $\mathcal{S}_{\nu}$ is a subordinator, whose construction, which we are now going to recall, is of major importance for everything to follow.

A subordinator is an increasing stochastic process that has independent and homogeneous increments.   For the purposes of the present paper, the following definition (called It\^{o}'s decomposition)\ serves best. Let $\mathcal{P}:=\sum_{k}\delta_{(x_k,y_k)}$ be a Poisson random measure on $[0,\infty)\times (0,\infty]$ with the intensity measure $\mathbb{LEB}\times \nu$, where $\mathbb{LEB}$ denotes Lebesgue measure and $\nu$ is the L\'evy measure as in~\ref{it(a)}. Here and in what follows, $\delta_x$ denotes a Dirac measure at $x$. The stochastic process
\begin{equation*}
   \mathcal{S}_{\nu}(t):=\sum_{k:\;x_k\leq t}y_k=\int_{[0,t]\times (0,\infty]}y\mathcal{P}({\rm d}x,{\rm d}y),\quad t\geq 0,
\end{equation*}
is called a drift-free subordinator with L\'{e}vy measure $\nu$. Condition~\eqref{eq:nu_integrable} ensures that the sum above is a.s.~finite for all $t\geq 0$. In case when $\nu(x,\infty)=x^{-\alpha}$, $x>0$, for some $\alpha \in (0,1)$, the subordinator $\mathcal{S}_{\nu}$ is called $\alpha$-stable. We allow $\nu$ to be a finite measure, in which case $\mathcal{S}_{\nu}$ is a compound Poisson process.

\begin{definition}
A crinkled subordinator with L\'{e}vy measure $\nu$ (and with respect to the chosen basis of $\ell^2$) is an $\ell^2$-valued stochastic process $(\mathcal{C}_{\nu}(t))_{t\geq 0}$ defined by
\begin{equation*}
   \mathcal{C}_{\nu}(t):=\sum_{k:\;x_k\leq t}e_k\sqrt{y_k},\quad t\geq 0,
\end{equation*}
where $\mathcal{P}=\sum_{k}\delta_{(x_k,y_k)}$ is the Poisson process on $[0,\infty)\times (0,\infty)$ with intensity measure $\mathbb{LEB}\times \nu$.
\end{definition}

Note that 
for every fixed $t\geq 0$,
\begin{equation*}
   \|\mathcal{C}_{\nu}(t)\|^2_2=\sum_{k:\;x_k\leq t}y_k=\mathcal{S}_{\nu}(t)\in [0,\infty)
\end{equation*}
and therefore $\mathcal{C}_{\nu}(t)$ is a random element of $\ell^2$ a.s., for every $t\geq 0$. Note also that, as a curve, $t\mapsto \mathcal{C}_{\nu}(t)$ is not $\ell^2$-continuous but is a.s.~c\`{a}dl\`{a}g.

In a similar way as the closed range of a subordinator is defined, see~\cite[Section~1.4]{Bertoin}, we define the closed range of a crinkled subordinator.

\begin{definition}\label{def:range}
Fix $T>0$. The range $\mathcal{R}_{\nu}(T)$ of a crinkled subordinator on $[0,T]$ is a random closed subset of $\ell^2$ defined as the closure in $\ell^2$ of the image of $t\mapsto\mathcal{C}_{\nu}(t)$, $t\in [0,T]$. Thus,
\begin{equation*}
   \mathcal{R}_{\nu}(T)={\rm cl}\left(\{\mathcal{C}_{\nu}(t):	0\leq t\leq T\}\right)=\{\mathcal{C}_{\nu}(t):	0\leq t\leq T\}\cup \{\mathcal{C}_{\nu}(t-):	0\leq t\leq T\}.
\end{equation*}
\end{definition}
According to Lemma~\ref{eq:compactness} below, the set $ \mathcal{R}_{\nu}(T)$ is a.s.~compact in $\ell^2$ for every fixed $T>0$. In particular, $(\mathcal{R}_{\nu}(T),\|\cdot\|_2)$ is a compact metric space. Moreover, this space (up to isometries of $\ell^2$)\ does not depend on the choice of an orthonormal basis of $\ell^2$, whereas the  crinkled subordinator itself does depend on the basis. As a metric space, $(\mathcal{R}_{\nu}(T),\rho_2)$ is isometric to the closed range of the  subordinator $(\mathcal{S}_{\nu}(t))_{t\in [0,T]}$ given by
\begin{equation}\label{eq:one_dimensional_range}
\widetilde{\mathcal{R}}_{\nu}(T):={\rm cl}\left(\{\mathcal{S}_{\nu}(t):	0\leq t\leq T\}\right)=\{\mathcal{S}_{\nu}(t):	0\leq t\leq T\}\cup \{\mathcal{S}_{\nu}(t-):	0\leq t\leq T\},
\end{equation}
and endowed with the metric $(t,s)\mapsto \sqrt{|t-s|}$. An isometry $\varphi:\mathcal{R}_{\nu}(T)\to \widetilde{\mathcal{R}}_{\nu}(T)$ is given by
\begin{equation}\label{eq:isometry_with_one_dim}
\varphi(\mathcal{C}_{\nu}(t))=\|\mathcal{C}_{\nu}(t)\|_2^2=\mathcal{S}_{\nu}(t),\quad \varphi(\mathcal{C}_{\nu}(t-))=\|\mathcal{C}_{\nu}(t-)\|_2^2=\mathcal{S}_{\nu}(t-),\quad t\in [0,T].
\end{equation}
Among other things, this implies that the Hausdorff dimension of $\mathcal{R}_{\nu}(T)$ is equal to twice the Hausdorff dimension of $\widetilde{\mathcal{R}}_{\nu}(T)$; see Proposition~\ref{prop:Haus_dim} below. A formula for the Hausdorff dimension of $\widetilde{\mathcal{R}}_{\nu}(T)$ is available; see Section 5.1.2 in~\cite{Bertoin}.

\subsection{Main results}
For the sequence of random walks given by~\eqref{eq:rw_def} and satisfying assumptions~\ref{it(a)}, \ref{it(b)} and \ref{it(c)} above, define a sequence of finite random metric subspaces of $\ell^2$ via
\begin{equation*}
   \mathcal{M}^{(d)}_k:=\left\{\frac{S_0^{(d)}}{\sqrt{a(n)}},\frac{S_1^{(d)}}{\sqrt{a(n)}},\ldots,\frac{S_k^{(d)}}{\sqrt{a(n)}}\right\},\quad d\in\N,\quad k\in\N.
\end{equation*}
Each $\mathcal{M}^{(d)}_k$ is endowed with the induced $\ell^2$-metric. Equivalently, since $\mathcal{M}^{(d)}_k$ lives in
$\R^d$, which we assume to be naturally embedded into $\ell^2$, this induced metric coincides with the standard Euclidean metric.

Here is our main result.

\begin{theorem}\label{thm:main}
Assume that conditions~\ref{it(a)}, \ref{it(b)} and \ref{it(c)} are fulfilled. Fix $T>0$. Then, weakly on the Gromov--Hausdorff space of compact metric spaces, it holds
\begin{equation}\label{eq:main_result}
\left(\mathcal{M}^{(d)}_{\lfloor nT\rfloor},\rho_2\right)~\Longrightarrow (\mathcal{R}_{\nu}(T),\rho_2),\quad d\to\infty.
\end{equation}
\end{theorem}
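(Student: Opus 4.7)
The plan is to reduce Gromov--Hausdorff convergence to convergence of pairwise squared distances. By the isometry \eqref{eq:isometry_with_one_dim}, $(\mathcal{R}_\nu(T),\rho_2)$ is isometric to the closed range $\widetilde{\mathcal{R}}_\nu(T)$ of the one-dimensional subordinator $\mathcal{S}_\nu$ under the metric $(t,s)\mapsto\sqrt{|t-s|}$, and the orthogonality built into the crinkled subordinator gives $\|\mathcal{C}_\nu(t)-\mathcal{C}_\nu(s)\|_2^2=\mathcal{S}_\nu(t)-\mathcal{S}_\nu(s)$ for $s\le t$. So the task becomes to show that the rescaled squared-distance matrix
\begin{equation*}
D^{(d)}_{i,j}:=\frac{\|S^{(d)}_j-S^{(d)}_i\|_2^2}{a(n)},\qquad 0\le i\le j\le \lfloor nT\rfloor,
\end{equation*}
approximates the increment matrix $\bigl(\mathcal{S}_\nu(j/n)-\mathcal{S}_\nu(i/n)\bigr)$ jointly in $(i/n,j/n)$.

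Expanding the squared norm decomposes $D^{(d)}_{i,j}$ into a \emph{diagonal} part $a(n)^{-1}\sum_{k=i+1}^j\|X^{(d)}_k\|_2^2$ and a \emph{cross} part $R^{(d)}_{i,j}:=2a(n)^{-1}\sum_{i<k<l\le j}\langle X^{(d)}_k,X^{(d)}_l\rangle_2$. The diagonal part converges to the increments of $\mathcal{S}_\nu$ by \eqref{eq:FLT}, so the whole argument hinges on showing $R^{(d)}_{i,j}\to 0$ in probability uniformly over $(i,j)$. I would use the threshold decomposition $X^{(d)}_k=X^{(d),>s}_k+X^{(d),\le s}_k$ with $X^{(d),>s}_k:=X^{(d)}_k\ind\{\|X^{(d)}_k\|_2^2>sa(n)\}$, which splits $R^{(d)}_{i,j}$ into four sub-sums. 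The big--big piece contains only finitely many nonzero terms (by the tightness of the big-jump point process that follows from \eqref{eq:regular_variation_gen}), and each of them vanishes in probability by the asymptotic orthogonality \ref{it(b)}. The small--small piece, after re-centering via \eqref{eq:centering_is_negligible}, reduces to a double martingale sum whose maximum is controlled by Doob's inequality together with \eqref{eq:regular_variation_trunc_sec_moment}; the two mixed pieces are handled by Cauchy--Schwarz using the same ingredients. Letting $d\to\infty$ and then $s\to 0$ delivers the required uniform vanishing.

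With the cross term under control, the preceding step gives that the Gromov--Hausdorff distance between $(\mathcal{M}^{(d)}_{\lfloor nT\rfloor},\rho_2)$ and the finite one-dimensional metric space $\widetilde{\mathcal{M}}^{(d)}_n:=\{\mathcal{S}^{(d)}_\nu(k/n):0\le k\le \lfloor nT\rfloor\}$ endowed with the metric $(u,v)\mapsto\sqrt{|u-v|}$, where $\mathcal{S}^{(d)}_\nu(t):=a(n)^{-1}\sum_{k=1}^{\lfloor nt\rfloor}\|X^{(d)}_k\|_2^2$, tends to zero in probability. A standard c\`adl\`ag modulus-of-continuity argument applied together with \eqref{eq:FLT} then shows that $\widetilde{\mathcal{M}}^{(d)}_n$ converges weakly in the Gromov--Hausdorff space to $(\widetilde{\mathcal{R}}_\nu(T),\sqrt{|\cdot-\cdot|})$, which is isometric to $(\mathcal{R}_\nu(T),\rho_2)$ via \eqref{eq:isometry_with_one_dim}, yielding \eqref{eq:main_result}. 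The main obstacle I anticipate is the uniform bound on $R^{(d)}_{i,j}$ over all pairs $(i,j)$ simultaneously: pointwise vanishing for a fixed $(i,j)$ is comparatively straightforward, but uniformity is delicate because $R^{(d)}_{i,j}$ is not a martingale in either of its indices, so Doob's maximal inequality has to be applied carefully to the nested partial-sum processes obtained by fixing one index and summing over the other.
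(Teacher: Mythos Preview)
Your route is correct and genuinely different from the paper's. The paper does \emph{not} reduce to the one-dimensional range. Instead it truncates the walk itself, defining $\mathcal{M}^{(d)}_{\lfloor nT\rfloor}(s)$ from the big-jump partial sums $S^{(d)}_k(s)=\sum_{j\le k}X_j^{(d)}\ind\{\|X_j^{(d)}\|_2^2\ge sa(n)\}$ and a finite truncated limit $\mathcal{R}^{(s)}_\nu(T)\subset\ell^2$, and then runs Billingsley's three-step scheme: (i) $\mathcal{M}^{(d)}_{\lfloor nT\rfloor}(s)\Rightarrow\mathcal{R}^{(s)}_\nu(T)$ for fixed $s$, via an extended Skorokhod representation for the point process $\sum_k\delta_{(k/n,\|X_k^{(d)}\|_2^2/a(n))}$ and a distortion bound on the obvious bijection between two sets of $P+1$ points; (ii) $\mathcal{R}^{(s)}_\nu(T)\to\mathcal{R}_\nu(T)$ a.s.\ as $s\downarrow0$; (iii) $d_{GH}(\mathcal{M}^{(d)}_{\lfloor nT\rfloor}(s),\mathcal{M}^{(d)}_{\lfloor nT\rfloor})\to0$ via Doob's inequality applied to the centered small-jump walk. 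The technical estimates you need are the same ones the paper proves---in particular your big--big piece is exactly the paper's relation $\sup_{i\ne j}|\langle X_i^{(d),>s},X_j^{(d),>s}\rangle_2|/a(n)\to0$ in probability (which requires an extra cut at $\|X_1\|\,\|X_2\|\gtrless Sa(n)$ to combine tightness of norms with~\ref{it(b)}), and your small-jump control is the paper's step (iii). What your route buys is that it bypasses the Skorokhod coupling and the auxiliary $\ell^2$-valued truncated spaces, trading them for the one-dimensional $J_1$-convergence~\eqref{eq:FLT} and the elementary fact that for monotone processes $J_1$-convergence yields Hausdorff convergence of closed ranges. What the paper's route buys is modularity: the three relations are decoupled, and step (i) makes transparent why the limit lives in~$\ell^2$. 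One practical remark on your ``main obstacle'': the small--small cross piece is most cleanly handled not as a double martingale but by rewriting $R^{(d),\le s}_{i,j}=a(n)^{-1}\|S_j^{(d),\le s}-S_i^{(d),\le s}\|_2^2-a(n)^{-1}\sum_{k=i+1}^j\|X_k^{(d),\le s}\|_2^2$; then $\sup_{i,j}$ of each nonnegative term is bounded, respectively, by $4a(n)^{-1}\max_k\|S_k^{(d),\le s}\|_2^2$ (Doob plus~\ref{it(c)} and~\eqref{eq:regular_variation_trunc_sec_moment}) and by $a(n)^{-1}\sum_{k\le\lfloor nT\rfloor}\|X_k^{(d),\le s}\|_2^2$ (Markov plus~\eqref{eq:regular_variation_trunc_sec_moment}), which sidesteps the two-index martingale issue entirely.
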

\begin{rem}
A reminder on the Gromov--Hausdorff space will be given in Section~\ref{sec:GH_convergence}.
\end{rem}

\begin{cor}
Assume that the distribution of $\|X^{(d)}\|$ does not depend on $d$ and satisfies~\eqref{eq:regular_variation1}. Suppose further that $\Theta^{(d)}$ and $\|X^{(d)}\|$ are independent, $\E \Theta^{(d)}=0$ and~\eqref{eq:assump_orthogonal_indep} holds. Then~\eqref{eq:main_result} holds.
\end{cor}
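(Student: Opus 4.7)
The proof plan is essentially a verification plan: under the stronger but simpler hypotheses of the corollary, each of the three conditions~\ref{it(a)}, \ref{it(b)}, \ref{it(c)} of Theorem~\ref{thm:main} is either automatic or was already observed to reduce to the stated assumptions in the discussion following Section~\ref{sec:assumptions}. Once all three hold, Theorem~\ref{thm:main} applies verbatim and yields the conclusion.

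\textbf{Verification of \ref{it(a)}.} Since the distribution of $\|X^{(d)}\|^2$ does not depend on $d$ and satisfies~\eqref{eq:regular_variation1} with $\alpha \in (0,1)$, the standard one-dimensional theory of regular variation (see \cite{Bingham+Goldie+Teugels:1989}) produces a sequence $(a(n))_{n\in\N}$, constructed via the de Bruijn conjugate of $L$, such that $n\,\mmp\{\|X^{(d)}\|^2 > a(n)\} \to 1$. For this $a(n)$, the scaling relation~\eqref{eq:regular_variation_gen} holds with $\nu(x,\infty) = x^{-\alpha}$, $x>0$, which is a L\'evy measure satisfying~\eqref{eq:nu_integrable} precisely because $\alpha < 1$. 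The truncated second-moment condition~\eqref{eq:regular_variation_trunc_sec_moment} holds automatically in the identically-distributed regularly-varying case, as asserted (and to be verified in detail at \eqref{eq:regular_variation_trunc_sec_moment_verify}); the standard argument is Karamata's theorem applied to $\E[\|X^{(d)}\|^2 \ind_{\{\|X^{(d)}\|^2 \leq sa(n)\}}]$, which for $\alpha \in (0,1)$ is $\sim \frac{\alpha}{1-\alpha}\, sa(n)\, \mmp\{\|X^{(d)}\|^2 > sa(n)\}$, so after multiplication by $n/a(n)$ we obtain a quantity of order $s^{1-\alpha}$, which tends to $0$ as $s \to 0+$.

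\textbf{Verification of \ref{it(b)}.} Here we use independence of $\|X^{(d)}\|_2$ and $\Theta^{(d)}$. Under this independence, the event $\{\|X_1^{(d)}\|_2^2 \geq s a(n),\, \|X_2^{(d)}\|_2^2 \geq s a(n)\}$ is independent of $(\Theta_1^{(d)}, \Theta_2^{(d)})$, so the conditional probability in~\eqref{eq:assump_orthogonal} collapses to the unconditional one and the condition \ref{it(b)} becomes equivalent to~\eqref{eq:assump_orthogonal_indep}, which is exactly the hypothesis of the corollary. This reduction is the content of the paragraph titled ``Independent radial and angular components of $X^{(d)}$'' in Section~\ref{sec:assumptions}.

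\textbf{Verification of \ref{it(c)}.} Using the decomposition $X^{(d)} = \|X^{(d)}\|_2\,\Theta^{(d)}$ and the assumed independence of the two factors,
\begin{equation*}
\E\bigl[X^{(d)}\ind_{\{\|X^{(d)}\|_2^2 \leq sa(n)\}}\bigr] = \E\bigl[\|X^{(d)}\|_2\ind_{\{\|X^{(d)}\|_2^2 \leq sa(n)\}}\bigr]\cdot \E\Theta^{(d)} = 0,
\end{equation*}
because $\E\Theta^{(d)} = 0$ by hypothesis. Hence the left-hand side of~\eqref{eq:centering_is_negligible} is identically zero and \ref{it(c)} holds trivially.

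\textbf{Conclusion.} Since \ref{it(a)}, \ref{it(b)}, \ref{it(c)} all hold, Theorem~\ref{thm:main} yields the Gromov--Hausdorff weak convergence~\eqref{eq:main_result}. There is no substantive ``hard step'' in the corollary itself: the work has all been pushed into Theorem~\ref{thm:main}, and the only calculation not immediate from the stated reductions is the Karamata-type verification of~\eqref{eq:regular_variation_trunc_sec_moment}, which is standard in the regularly-varying setting.
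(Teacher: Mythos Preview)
Your proof is correct and follows precisely the approach intended by the paper: the corollary has no separate proof there, and the reductions you carry out are exactly those spelled out in the discussion of Section~\ref{sec:assumptions} (together with the Karamata computation at~\eqref{eq:regular_variation_trunc_sec_moment_verify}).
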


For a compact metric space $M$, denote by $\diam(M)$ its diameter. Since the mapping $M\mapsto \diam(M)$ is continuous with respect to the Gromov--Hausdorff metric, see Exercise~7.3.14 in~\cite{Burago+Burago}, we immediately obtain the following corollary of Theorem~\ref{thm:main}.
\begin{cor}
Under the same assumptions as in Theorem~\ref{thm:main}, we have
\begin{equation*}
   (\diam(\mathcal{M}^{(d)}_{\lfloor nT\rfloor}))^2=\frac{\max_{0\leq i,k\leq \lfloor nT\rfloor}\|S_i^{(d)}-S_k^{(d)}\|_2^2}{a(n)}\Longrightarrow (\diam(\mathcal{R}_{\nu}(T))^2=\mathcal{S}_{\nu}(T),\quad d\to\infty.
\end{equation*}
\end{cor}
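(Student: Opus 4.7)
The plan is to combine Theorem~\ref{thm:main} with the Gromov--Hausdorff continuity of the diameter functional cited in the statement, and then identify the limit object explicitly. In outline, I would proceed as follows.

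First, by Theorem~\ref{thm:main}, $\bigl(\mathcal{M}^{(d)}_{\lfloor nT\rfloor},\rho_2\bigr)\Longrightarrow (\mathcal{R}_{\nu}(T),\rho_2)$ weakly on the Gromov--Hausdorff space of compact metric spaces. Since $M\mapsto\diam(M)$ is continuous with respect to the Gromov--Hausdorff metric (Exercise~7.3.14 in~\cite{Burago+Burago}), the continuous mapping theorem gives
\begin{equation*}
   \diam\bigl(\mathcal{M}^{(d)}_{\lfloor nT\rfloor}\bigr)\Longrightarrow \diam(\mathcal{R}_{\nu}(T)),\quad d\to\infty,
\end{equation*}
and a second application (with the continuous map $x\mapsto x^2$ on $[0,\infty)$) promotes this to convergence of the squared diameters. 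On the prelimit side, the equality
\begin{equation*}
   \bigl(\diam(\mathcal{M}^{(d)}_{\lfloor nT\rfloor})\bigr)^2=\frac{\max_{0\leq i,k\leq \lfloor nT\rfloor}\|S_i^{(d)}-S_k^{(d)}\|_2^2}{a(n)}
\end{equation*}
is immediate from the definition of $\mathcal{M}^{(d)}_{\lfloor nT\rfloor}$ with the induced $\ell^2$-metric.

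It remains to identify $(\diam(\mathcal{R}_{\nu}(T)))^2$ with $\mathcal{S}_{\nu}(T)$. The cleanest route is via the isometry $\varphi:\mathcal{R}_{\nu}(T)\to\widetilde{\mathcal{R}}_{\nu}(T)$ from~\eqref{eq:isometry_with_one_dim}, where $\widetilde{\mathcal{R}}_{\nu}(T)$ carries the metric $(t,s)\mapsto\sqrt{|t-s|}$. Since $\widetilde{\mathcal{R}}_{\nu}(T)\subseteq [0,\mathcal{S}_{\nu}(T)]$ and contains both $0$ and $\mathcal{S}_{\nu}(T)$, its diameter under this metric equals $\sqrt{\mathcal{S}_{\nu}(T)}$, so that $(\diam(\mathcal{R}_{\nu}(T)))^2=\mathcal{S}_{\nu}(T)$. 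Equivalently, one can work directly in $\ell^2$: by the orthogonality of the jumps of $\mathcal{C}_{\nu}$, for $0\leq s\leq t\leq T$ one has $\|\mathcal{C}_{\nu}(t)-\mathcal{C}_{\nu}(s)\|_2^2=\mathcal{S}_{\nu}(t)-\mathcal{S}_{\nu}(s)$, which is maximized at $s=0$, $t=T$, yielding the same value $\mathcal{S}_{\nu}(T)$.

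There is no genuine obstacle here beyond the identification of the limiting diameter; the statement is truly ``immediate'' once Theorem~\ref{thm:main} is in hand. The only minor subtlety is ensuring that the supremum in $\diam(\mathcal{R}_{\nu}(T))$ is attained or at least approached by elements of the form $\mathcal{C}_{\nu}(t)$ or $\mathcal{C}_{\nu}(t-)$, but this follows directly from Definition~\ref{def:range} and the monotonicity of $t\mapsto\mathcal{S}_{\nu}(t)$.
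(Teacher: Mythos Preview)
Your proposal is correct and follows essentially the same approach as the paper: the corollary is deduced from Theorem~\ref{thm:main} via the continuous mapping theorem, using the Gromov--Hausdorff continuity of $M\mapsto\diam(M)$. You additionally spell out the identification $(\diam(\mathcal{R}_{\nu}(T)))^2=\mathcal{S}_{\nu}(T)$ (via the isometry~\eqref{eq:isometry_with_one_dim} or the direct orthogonality computation), which the paper leaves implicit in the statement itself.
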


\subsection{Examples}

Below we consider three families of random walks satisfying the assumptions~\ref{it(a)}, \ref{it(b)} and \ref{it(c)}.

\begin{example}[Rotationally  invariant distributions] Let $X^{(d)}$ be a random vector in $\R^d$ with a rotationally invariant distribution. This means that $\Theta^{(d)}$ is uniformly distributed on the unit sphere in $\R^d$, and $\|X^{(d)}\|_2$ and $\Theta^{(d)}$ are independent. Assume that the distribution of $\|X^{(d)}\|_2^2$ satisfies~\eqref{eq:regular_variation_gen} and~\eqref{eq:regular_variation_trunc_sec_moment}. Condition~\ref{it(b)} follows from Remark~3.2.5 in~\cite{vershynin_book}, whereas condition~\ref{it(c)} is a consequence of $\E \Theta^{(d)}=0$ and independence of $\|X^{(d)}\|_2$ and $\Theta^{(d)}$.
\end{example}

\begin{example}[Random walks jumping along the coordinate axes]  The following model is similar to the simple random walk on $\Z^d$. Let $\widehat{V}^{(d)}$ be a random vector distributed uniformly on the set $\{\pm e_1,\ldots, \pm e_d\}$, that is  $\mathbb{P}\{\widehat{V}^{(d)} = e_j\}=\mathbb{P}\{\widehat{V}^{(d)} = -e_j\}=1/(2d)$ for all $j\in \{1,\dots, d\}$. For every $d\in\mathbb{N}$, put $X^{(d)}:= R^{(d)} \cdot \widehat{V}^{(d)}$, where $R^{(d)}$ is a positive random variable which is independent of $\widehat{V}^{(d)}$. Assume that the distribution of $\|X^{(d)}\|_2^2=(R^{(d)})^2$ satisfies~\eqref{eq:regular_variation_gen} and~\eqref{eq:regular_variation_trunc_sec_moment}. Condition~\ref{it(b)} holds automatically, since $\Theta^{(d)}=\widehat{V}^{(d)}$,
and therefore
\begin{equation*}
   \mmp\left\{\left\langle \Theta^{(d)}_1,\Theta^{(d)}_2\right\rangle\neq 0\right\}=\mmp\left\{\left\langle \widehat{V}_1^{(d)},\widehat{V}_2^{(d)}\right\rangle\neq 0\right\}=\frac{1}{d}\to 0,\quad d\to\infty.
\end{equation*}
Condition~\ref{it(c)} also holds automatically since $\E \widehat{V}^{(d)}=0$.
\end{example}

\begin{example}[Random walks with i.i.d.\ symmetric heavy-tailed components]\label{ex3}
Let $\xi$ be a symmetric random variable such that for some $\alpha \in (0,1)$,
\begin{equation}\label{eq:regular_variation_ex3}
\mathbb{P}\{\xi^2>t\}~\sim~ t^{-\alpha},\quad t\to\infty.
\end{equation}
For an array $(\xi_{i,j})_{i,j\in\mathbb{N}}$ of independent copies of $\xi$, put
\begin{equation*}
   X^{(d)}_i:=d^{-1/(2\alpha)}(\xi_{i,1},\xi_{i,2},\ldots,\xi_{i,d}),\quad i\in\N,\quad d\in\N,
\end{equation*}
and let $X^{(d)}$ be a generic copy of $X^{(d)}_i$. According to Eq.~(4) in~\cite{Heyde:1968}, for every fixed $x>0$,
\begin{equation}\label{eq:heyde_ex3}
\lim_{d\to\infty}n\mmp\{\|X^{(d)}\|_2^2>xn^{1/\alpha}\}=\lim_{d\to\infty}n\cdot d\cdot\mmp\{\xi^2>xd^{1/\alpha}n^{1/\alpha}\}=x^{-\alpha}.
\end{equation}
Thus,~\eqref{eq:regular_variation_gen} holds with $\nu(x,\infty)=x^{-\alpha}$ and $a(n)=n^{1/\alpha}$. 
Condition~\eqref{eq:regular_variation_trunc_sec_moment} follows from the following chain of estimates:
\begin{multline*}
\frac{n}{a(n)}\E\left(\|X^{(d)}_1\|_2^2\ind_{\{\|X^{(d)}_1\|_2^2\leq s a(n)\}}\right)=\frac{n}{(nd)^{1/\alpha}}\E\left(\left(\sum_{j=1}^{d}\xi_{1,j}^2\right)\ind_{\{\sum_{j=1}^{d}\xi_{1,j}^2\leq s (nd)^{1/\alpha}\}}\right)\\
\leq (nd)^{1-1/\alpha}\E\left(\xi_{1,1}^2\ind_{\{\xi_{1,1}^2\leq s(nd)^{1/\alpha}\}}\right)~\sim~\frac{\alpha}{1-\alpha}s^{1-\alpha},\quad d\to\infty,
\end{multline*}
where the last asymptotic equivalence is a consequence of~\eqref{eq:regular_variation_ex3}; see p.~579 in~\cite{Feller_Vol2}. The right-hand side of the last display converges to zero as $s\to 0+$, which yields~\eqref{eq:regular_variation_trunc_sec_moment}. Condition~\eqref{eq:centering_is_negligible} follows from the symmetry of $X^{(d)}$, which is inherited from the symmetry of $\xi$. For a proof (standard but technical)\ of part~\ref{it(b)}, we refer the reader to Lemma~\ref{lem:example3_verification} in the Appendix.

\end{example}

\subsection{Compact subsets of \texorpdfstring{$\ell^2$}{l2} and their convergence}\label{sec:GH_convergence}

In this subsection, we recall the definitions of the Hausdorff and Gromov--Hausdorff metrics, and the notion of Hausdorff distance up to isometry in $\ell^2$.

\subsubsection{Hausdorff and Gromov--Hausdorff metrics}
Let $(M,\rho)$ be an arbitrary metric space. Denote by $\mathcal{K}(M)$ the set of non-empty compact subsets of $M$. Let $d_{H}$ denote the Hausdorff distance between elements of $\mathcal{K}(M)$, defined by
\begin{equation*}
   d_{H}(A,B) = \inf\{r>0: A\subset U_r(B), B\subset U_r(A)\}.
\end{equation*}
Here, $U_r(A) = \{m\in M: \rho(A, m) < r\}$ is the $r$-neighborhood of $A$ in $M$. It is well known that $(\mathcal{K}(M),d_{H})$ is a metric space. If $M$ is complete, then $(\mathcal{K}(M),d_{H})$ is also complete.

By an isometry between two sets $E_1$ and $E_2$ living in possibly different metric spaces $(M_1,\rho_1)$ and $(M_2,\rho_2)$, we understand a bijective mapping $J:E_1\to E_2$ such that $\rho_2(J(x),J(y))=\rho_1(x,y)$, for all $x,y\in M_1$. The \emph{Gromov--Hausdorff distance} $d_{GH}(E_1,E_2)$ between two compact metric spaces $E_1$ and $E_2$ is defined as the infimum of $d_{H}(\varphi_1(E_1),\varphi_2(E_2))$, where the infimum is taken over all metric spaces $(M,\rho)$ and all isometric embeddings (injective isometries)\ $\varphi_1: E_1\to M$ and $\varphi_2:E_2\to M$. It is known that the set of isometry classes of compact metric spaces, endowed with the Gromov--Hausdorff distance, becomes a complete separable metric space, called the \emph{Gromov--Hausdorff space}.

\subsubsection{Hausdorff distance up to isometry in \texorpdfstring{$\ell^2$}{l2}}

Compact metric spaces we are interested in live in the same Hilbert space $\ell^2$. This suggests that the full power of the general notion of Gromov--Hausdorff distance might not be needed. Addressing this question, the following notion of closeness between compact subsets of $\ell^2$, called {\it Hausdorff distance up to isometry}, has been proposed in~\cite{KabMar:2022}. Note that a very close concept can be found in Exercise 5.26 in~\cite{Petrunin_lectures}.

Introduce the following equivalence relation $\sim$ on $\mathcal{K}(\ell^2)$, the collection of non-empty compact subsets of $\ell^2$. Two subsets $K_1\subset \ell^2$ and $K_2\subset \ell^2$ are considered equivalent if there is an isometry $J:K_1\to K_2$, that is a bijection between $K_1$ and $K_2$ that preserves distances. The equivalence class of a compact set $K$ is denoted by $[K]:=\{K'\in \mathcal{K}(\ell^2): K\sim K'\}$. The set of all such equivalence classes is denoted by $\mathbb{H} := \mathcal{K}(\ell^2)/\sim$. Now we introduce a metric on $\mathbb{H}$. For $K_1,K_2\in \mathcal{K}(\ell^2)$, the \textit{Hausdorff distance up to isometry} between $[K_1]$ and $[K_2]$ is defined by
\begin{equation}\label{eq:def_up_to_isometry}
d_{\sim}([K_1], [K_2]) = \inf_{K_1'\in [K_1], K_2'\in [K_2]} d_{H}(K_1', K_2').
\end{equation}
\begin{prop}[Proposition 2.10 in~\cite{KabMar:2022}]\label{prop:tilde_d_is_a_metric}
The function $d_{\sim}:\mathbb{H}\times \mathbb{H}\mapsto [0,\infty)$ is a metric on $\mathbb{H}$.
\end{prop}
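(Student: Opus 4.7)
The plan is to verify in turn the four axioms of a metric: non-negativity, symmetry, the identity of indiscernibles, and the triangle inequality. Non-negativity and symmetry are immediate from the definition in~\eqref{eq:def_up_to_isometry}, and finiteness of $d_{\sim}([K_1],[K_2])$ follows by translating representatives of $[K_1]$ and $[K_2]$ so they share a common point of $\ell^2$, which bounds the Hausdorff distance by $\diam(K_1)+\diam(K_2)$. The work is concentrated in the remaining two axioms, and both rest on a single auxiliary statement that I would extract as a lemma.

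The key auxiliary fact I would establish first is the following extension property: any isometry $J:A\to B$ between two subsets $A,B\subset \ell^2$ extends to an isometry of the whole space $\ell^2$. The strategy is to translate so that a distinguished point of $A$ maps to the origin and $J$ sends the origin to the origin, then apply the polarization identity $2\langle x,y\rangle_2=\|x\|_2^2+\|y\|_2^2-\|x-y\|_2^2$ to conclude that the translated map preserves inner products. This forces $J$ to extend linearly on $\mathrm{span}(A)$ and, by continuity, on the closed linear span. Because $\ell^2$ is infinite-dimensional, the orthogonal complements of the closed spans of $A$ and $B$ have the same Hilbert dimension, so a choice of matched orthonormal bases extends the partial isometry to a unitary operator on $\ell^2$.

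Granted this extension lemma, the triangle inequality proceeds as follows. Fix $\varepsilon>0$ and choose representatives $K_1'\in[K_1]$, $K_2',K_2''\in[K_2]$, $K_3'\in[K_3]$ with $d_H(K_1',K_2')<d_{\sim}([K_1],[K_2])+\varepsilon$ and $d_H(K_2'',K_3')<d_{\sim}([K_2],[K_3])+\varepsilon$. Since $K_2'$ and $K_2''$ are isometric compact subsets of $\ell^2$, the extension lemma produces an isometry $\Phi$ of $\ell^2$ with $\Phi(K_2'')=K_2'$, and then $K_3'':=\Phi(K_3')\in[K_3]$ satisfies $d_H(K_2',K_3'')=d_H(K_2'',K_3')$ because $\Phi$ preserves the ambient metric. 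The ordinary triangle inequality for $d_H$ inside $\ell^2$ then gives
\begin{equation*}
d_{\sim}([K_1],[K_3])\le d_H(K_1',K_3'')\le d_H(K_1',K_2')+d_H(K_2',K_3'')<d_{\sim}([K_1],[K_2])+d_{\sim}([K_2],[K_3])+2\varepsilon,
\end{equation*}
and letting $\varepsilon\to 0$ closes the argument.

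For the identity of indiscernibles, only the direction $d_{\sim}([K_1],[K_2])=0\Rightarrow[K_1]=[K_2]$ requires thought. Choose representatives $K_1^{(n)}\in[K_1]$ and $K_2^{(n)}\in[K_2]$ with $d_H(K_1^{(n)},K_2^{(n)})\to 0$. Since both sets lie in the common metric space $\ell^2$, one has $d_{GH}(K_1^{(n)},K_2^{(n)})\le d_H(K_1^{(n)},K_2^{(n)})$; combining this with the isometry-invariance $d_{GH}(K_1^{(n)},K_2^{(n)})=d_{GH}(K_1,K_2)$ yields $d_{GH}(K_1,K_2)=0$, and the classical fact that vanishing Gromov--Hausdorff distance between two compact metric spaces forces them to be isometric gives $[K_1]=[K_2]$. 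The main obstacle in the whole plan is the extension lemma for isometries of subsets of $\ell^2$; it is the unique step where the Hilbert space structure is essentially used, and without it one cannot freely transport representatives across equivalence classes in the triangle inequality argument.
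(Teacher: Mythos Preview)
The present paper does not include a proof of this proposition; it is quoted verbatim as Proposition~2.10 of~\cite{KabMar:2022}. There is therefore no in-paper argument to compare against, and I evaluate your proposal on its own terms.

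Your overall architecture is the right one, and the identity-of-indiscernibles step via $d_{GH}\le d_H$ is clean. The gap is in the extension lemma: the assertion that ``because $\ell^2$ is infinite-dimensional, the orthogonal complements of the closed spans of $A$ and $B$ have the same Hilbert dimension'' is false, even for compact $A,B$. Take $A=\{0\}\cup\{e_k/k:k\ge 1\}$ and $B=\{0\}\cup\{e_{2k}/k:k\ge 1\}$; both are compact, the bijection $0\mapsto 0$, $e_k/k\mapsto e_{2k}/k$ is an isometry, yet $\overline{\mathrm{span}}\,A=\ell^2$ has trivial orthogonal complement while $\overline{\mathrm{span}}\,B$ has infinite-dimensional complement. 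The unique linear extension sends $e_k\mapsto e_{2k}$, an isometric embedding but not a surjection, so no global isometry of $\ell^2$ extends this $J$; in the reverse direction, one cannot even extend $J^{-1}$ to an isometric embedding of $\ell^2$.

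The repair is minor. One clean route: before invoking the lemma in the triangle-inequality step, replace all four representatives $K_1',K_2',K_2'',K_3'$ by their images under a fixed isometric embedding $\iota:\ell^2\to\ell^2$ of infinite codimension (for instance $\iota(e_k)=e_{2k}$); this preserves Hausdorff distances and equivalence classes. After this reduction both $\overline{\mathrm{span}}\,K_2'$ and $\overline{\mathrm{span}}\,K_2''$ have infinite-dimensional orthogonal complements, and your polarization argument then does produce an isometric embedding $\Phi:\ell^2\to\ell^2$ extending $J$. Surjectivity of $\Phi$ is never used downstream---preservation of $d_H$ and membership of $\Phi(K_3')$ in $[K_3]$ require only that $\Phi$ be distance-preserving---so the remainder of your proof goes through unchanged.
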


The following result establishes equivalence of Gromov--Hausdorff convergence and convergence in $(\mathbb{H},d_{\sim})$.

\begin{theorem}[Theorem 2.12 in~\cite{KabMar:2022}]\label{theo:equivalence_GH_and_up_to_iso}
Let $K_1,K_2,\ldots$ and $K$ be compact subsets of $\ell^2$. Then, $[K_n] \to [K]$ in $(\mathbb{H}, d_{\sim})$ if and only if $K_n\to K$ in the Gromov--Hausdorff sense (where $K_n$ and $K$ are regarded as metric spaces with the induced $\ell^2$-metric).
\end{theorem}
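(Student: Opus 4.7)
The plan is to establish the two implications of the equivalence separately.

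\emph{Forward direction.} If $[K_n] \to [K]$ in $(\mathbb{H}, d_\sim)$, then for any $\varepsilon > d_\sim([K_n], [K])$ the infimum in~\eqref{eq:def_up_to_isometry} produces representatives $K_n' \in [K_n]$ and $K^{(n)} \in [K]$ sitting inside $\ell^2$ with $d_H(K_n', K^{(n)}) < \varepsilon$. The witnessing isometries $K_n \to K_n'$ and $K \to K^{(n)}$ are genuine isometric embeddings into the common metric space $\ell^2$, so by the definition of $d_{GH}$ one has $d_{GH}(K_n, K) \leq d_H(K_n', K^{(n)}) < \varepsilon$. Letting $\varepsilon \downarrow d_\sim([K_n], [K])$ gives $d_{GH}(K_n, K) \to 0$.

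\emph{Reverse direction.} Suppose $d_{GH}(K_n, K) \to 0$. Since $K$ itself is a valid representative of $[K]$, it suffices to produce, for every $\varepsilon > 0$ and all $n$ large, an isometric copy $\widetilde K_n \subset \ell^2$ of $K_n$ with $d_H(\widetilde K_n, K) < \varepsilon$. By a standard reformulation of GH convergence one extracts $\varepsilon_n$-isometries $f_n : K_n \to K$ with $\varepsilon_n \to 0$. The idea is then to exploit the decomposition $\ell^2 \cong H_0 \oplus H_1$ with $H_0 \cong H_1 \cong \ell^2$: after applying an isometry of $\ell^2$ (preserving the class $[K]$) one may assume $K \subset H_0$, and construct $\widetilde \psi_n : K_n \to \ell^2$ of the form $\widetilde \psi_n = u_n + v_n$ with $u_n(x) \in H_0$ close to $f_n(x) \in K$ and $v_n(x) \in H_1$ carrying a small metric correction. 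Taking $v_n = \sqrt{\varepsilon_n}\, \iota_n$ for any isometric embedding $\iota_n : K_n \hookrightarrow H_1$ (available because $K_n \subset \ell^2 \cong H_1$), orthogonality turns the isometry constraint into $\|u_n(x) - u_n(y)\|_2^2 = (1 - \varepsilon_n)\|x-y\|_2^2$. Since $\diam(K_n)$ stays bounded by GH convergence, $\|v_n(x)\|_2 = O(\sqrt{\varepsilon_n})$ uniformly, so the task reduces to building a $(1 - \varepsilon_n)^{1/2}$-scaled isometric embedding $u_n : K_n \to H_0$ approximating $f_n$ within $o(1)$.

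\emph{Main obstacle.} The technical crux is producing an \emph{exact} scaled isometric embedding $u_n$ whose image stays close to $K$, starting only from the approximate isometry $f_n$. I would handle this via finite-dimensional reduction: fix a $\delta$-net $F_n \subset K_n$ and build $u_n$ on $F_n$ via the Gram-matrix/Cholesky decomposition of its scaled distance matrix (negative-definiteness is automatic since $F_n \subset \ell^2$), then align the resulting configuration with $f_n(F_n) \subset K$ by means of an orthogonal transformation of $H_0$ determined by a Procrustes-type minimization. The values of $u_n$ are extended from $F_n$ to all of $K_n$ by uniform continuity on the compact set $K_n$, and a diagonal extraction in $(\delta, n)$ produces the required $\widetilde K_n$ with $d_H(\widetilde K_n, K) \to 0$.
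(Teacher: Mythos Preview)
First, note that the present paper does not prove this statement at all: it is quoted verbatim from~\cite{KabMar:2022} and used as a black box, so there is no ``paper's own proof'' to compare against. I will therefore evaluate your proposal on its own terms.

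The forward direction is fine. In the reverse direction there is a genuine gap, and the orthogonal-decomposition trick does less work than you think.

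\textbf{The decomposition does not reduce the problem.} After introducing $v_n=\sqrt{\varepsilon_n}\,\iota_n$, you are left needing a $\sqrt{1-\varepsilon_n}$-scaled isometric embedding $u_n:K_n\to H_0$ that is uniformly close to the $\varepsilon_n$-isometry $f_n$. But producing a scaled isometric copy of $K_n$ lying close to $K$ inside $H_0\cong\ell^2$ is essentially the original problem (the scaling factor tends to~$1$). So nothing has been gained; the ``main obstacle'' you identify is the whole difficulty, not a residual technicality.

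\textbf{The extension step is wrong.} You write that ``the values of $u_n$ are extended from $F_n$ to all of $K_n$ by uniform continuity''. This is not a valid operation: a scaled isometric embedding defined on a finite $\delta$-net has no canonical extension to the ambient compact set, and a Lipschitz/Kirszbraun-type extension will not be distance-preserving. Consequently $\widetilde\psi_n=u_n+v_n$ would not be an isometry on $K_n$, and $\widetilde\psi_n(K_n)\notin[K_n]$.

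\textbf{How to repair it.} The extension is in fact unnecessary. Use the triangle inequality for $d_\sim$: if $F_n\subset K_n$ is a $\delta$-net and $G\subset K$ a $\delta$-net of the same cardinality with matching distance matrices up to $\varepsilon_n$, then
\[
d_\sim([K_n],[K])\le d_\sim([K_n],[F_n])+d_\sim([F_n],[G])+d_\sim([G],[K])\le 2\delta+d_\sim([F_n],[G]).
\]
For the middle term, with $N$ fixed, convergence of the labelled distance matrices gives convergence of the centred Gram matrices in the cone of $N\times N$ positive semidefinite matrices; continuity of the operator square root $A\mapsto A^{1/2}$ then yields configurations in $\mathbb R^N\subset\ell^2$ realising those Gram matrices and converging pointwise. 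This gives $d_\sim([F_n],[G])\to 0$ for fixed $\delta$, and your diagonal argument in $(\delta,n)$ finishes. The Procrustes/alignment step is subsumed by the square-root continuity and needs no separate quantitative bound.
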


\begin{rem}
The notion used in~\cite{Petrunin_lectures}  differs from our definition~\eqref{eq:def_up_to_isometry} by two aspects. Firstly, the space $\ell^2$ is replaced by a universal homogeneous metric space (Urysohn space)\ $\mathcal{U}_{\infty}$. Secondly, the infimum used in the definition of $d_{\sim}$ is taken over {\it global} isometries of $\mathcal{U}_{\infty}$. This defines a pseudometric on the family of compact subsets on $\mathcal{\mathcal{U}_{\infty}}$, for which the corresponding metric space is isometric to the Gromov--Hausdorff space.
\end{rem}

In view of Theorem~\ref{theo:equivalence_GH_and_up_to_iso}, we immediately obtain the following:
\begin{cor}
Under the assumptions of Theorem~\ref{thm:main}, the following holds true:
\begin{equation}
\label{eq:main_result_d_iso}
   [\mathcal{M}^{(d)}_{\lfloor nT\rfloor}]~\Longrightarrow~[\mathcal{R}_{\nu}(T)],\quad d\to\infty,
\end{equation}
weakly on the space of probability measures on $(\mathbb{H},d_{\sim})$.
\end{cor}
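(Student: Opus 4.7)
The corollary is a direct translation of Theorem~\ref{thm:main} into the language of $(\mathbb{H},d_\sim)$, so my plan is a short continuous-mapping-theorem argument. I would begin by introducing the natural ``forgetting'' map
\[
\iota\colon (\mathbb{H},d_\sim)\longrightarrow \bigl(\text{Gromov--Hausdorff space},d_{GH}\bigr),
\]
sending an $\ell^2$-isometry class $[K]_\sim$ to the isometry class of $(K,\rho_2)$ viewed as an abstract compact metric space. This map is well defined (if $K_1\sim K_2$ via a distance-preserving bijection, the two are in particular abstractly isometric) and injective (two compact subsets of $\ell^2$ abstractly isometric as metric spaces are by definition related by an $\sim$-isometry and hence belong to the same $\sim$-class).

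By the ``only if'' direction of Theorem~\ref{theo:equivalence_GH_and_up_to_iso}, the map $\iota$ is continuous; by the ``if'' direction, its partial inverse $\iota^{-1}\colon \iota(\mathbb{H})\to \mathbb{H}$ is continuous as well. In particular, $\iota$ is a topological embedding. Both the prelimit $\iota([\mathcal{M}^{(d)}_{\lfloor nT\rfloor}])$ and the limit $\iota([\mathcal{R}_\nu(T)])$ appearing in Theorem~\ref{thm:main} lie in the image $\iota(\mathbb{H})$, so a single application of the continuous mapping theorem to $\iota^{-1}$ converts the weak convergence~\eqref{eq:main_result} in the Gromov--Hausdorff space into the desired weak convergence~\eqref{eq:main_result_d_iso} in $(\mathbb{H},d_\sim)$.

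I do not foresee any substantive obstacle here, since all the analytic content has already been absorbed into Theorem~\ref{theo:equivalence_GH_and_up_to_iso}. The only mild subtlety to check is that the continuous mapping theorem for weak convergence requires $\iota^{-1}$ to be continuous only on a set of full measure under the law of the limit, and this is immediate because the limit $\iota([\mathcal{R}_\nu(T)])$ almost surely takes values in $\iota(\mathbb{H})$, the domain on which the partial inverse is defined and continuous.
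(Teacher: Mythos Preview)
Your argument is correct and is exactly the approach the paper intends: the paper simply states that the corollary follows ``in view of Theorem~\ref{theo:equivalence_GH_and_up_to_iso}'', and you have spelled out that this theorem makes the natural map $\iota$ a topological embedding so that the continuous mapping theorem transfers the Gromov--Hausdorff weak convergence of Theorem~\ref{thm:main} to $(\mathbb{H},d_\sim)$. The only cosmetic point is that $\iota^{-1}$ is defined on the subset $\iota(\mathbb{H})$ rather than the full Gromov--Hausdorff space, but since both the prelimits and the limit almost surely take values in $\iota(\mathbb{H})$, weak convergence in the ambient space restricts to weak convergence in the subspace (an immediate portmanteau check), after which $\iota^{-1}$ is globally continuous and the continuous mapping theorem applies without further caveat.
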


Let $\conv$ (respectively, $\oconv$)\ denote the operation of taking convex (respectively, closed convex)\ hull. Lemma~4.3 in~\cite{KabMar:2022} and the continuous mapping theorem yield the following result.

\begin{theorem}\label{theo:conv_to_convex_hull_of_wiener_spiral_in_hilbert_up_to_isometry}
Under the assumptions of Theorem~\ref{thm:main}, the following holds true:
\begin{equation*}
[\conv\mathcal{M}^{(d)}_{\lfloor nT\rfloor}]~\Longrightarrow~[\oconv\mathcal{R}_{\nu}(T)],\quad d\to\infty,
\end{equation*}
weakly on the space of probability measures on $(\mathbb{H},d_{\sim})$.
\end{theorem}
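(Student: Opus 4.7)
The plan is to derive this statement as an immediate corollary of Theorem~\ref{thm:main} via the continuous mapping theorem, applied to the map that sends a compact subset of $\ell^2$ to its closed convex hull.

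First, I would restate the conclusion of Theorem~\ref{thm:main} in the language of Hausdorff distance up to isometry: by Theorem~\ref{theo:equivalence_GH_and_up_to_iso}, this amounts to~\eqref{eq:main_result_d_iso}, namely
$$[\mathcal{M}^{(d)}_{\lfloor nT\rfloor}] \Longrightarrow [\mathcal{R}_{\nu}(T)]$$
weakly on $(\mathbb{H},d_\sim)$. The key technical input is Lemma~4.3 of~\cite{KabMar:2022}, which asserts that the closed convex hull map $\mathcal{F}:\mathbb{H}\to\mathbb{H}$ defined by $[K]\mapsto[\oconv K]$ is well-defined and continuous with respect to $d_\sim$. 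Well-definedness rests on the observation that any isometry $J:K_1\to K_2$ between two compact subsets of $\ell^2$ extends to an affine isometry between their closed convex hulls (a Mazur--Ulam type argument), so that the equivalence class of $\oconv K$ depends only on $[K]$. Continuity follows from the classical $1$-Lipschitz estimate $d_H(\oconv K_1, \oconv K_2) \leq d_H(K_1, K_2)$, valid in any normed space, combined with the infimum appearing in the definition~\eqref{eq:def_up_to_isometry} of $d_\sim$.

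With these two ingredients in hand, the continuous mapping theorem immediately yields
$$\mathcal{F}([\mathcal{M}^{(d)}_{\lfloor nT\rfloor}]) \Longrightarrow \mathcal{F}([\mathcal{R}_{\nu}(T)]),$$
which reads $[\oconv \mathcal{M}^{(d)}_{\lfloor nT\rfloor}] \Longrightarrow [\oconv \mathcal{R}_{\nu}(T)]$. To finish, I would observe that $\mathcal{M}^{(d)}_{\lfloor nT\rfloor}$ is a finite set sitting in a finite-dimensional subspace of $\ell^2$, so its convex hull is a compact polytope and hence already closed; thus $\oconv \mathcal{M}^{(d)}_{\lfloor nT\rfloor}=\conv \mathcal{M}^{(d)}_{\lfloor nT\rfloor}$, and the theorem follows.

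The only substantive obstacle is the continuity of $\mathcal{F}$ on $(\mathbb{H},d_\sim)$, which has already been absorbed into the cited Lemma~4.3; given that input, the proof above is a routine application of the continuous mapping theorem on top of Theorem~\ref{thm:main}.
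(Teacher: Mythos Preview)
Your proposal is correct and matches the paper's own argument exactly: the paper simply states that Lemma~4.3 in~\cite{KabMar:2022} together with the continuous mapping theorem yield the result, and your write-up is a faithful expansion of that one-line justification, including the observation that $\conv\mathcal{M}^{(d)}_{\lfloor nT\rfloor}=\oconv\mathcal{M}^{(d)}_{\lfloor nT\rfloor}$ for finite sets.
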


The limiting closed convex hull can be characterized as follows. Let $G_{\downarrow}$ denote the set of nonincreasing functions $g:[0,T]\to [0,1]$. Then
\begin{equation*}
   \oconv\mathcal{R}_{\nu}(T)=\left\{\sum_{k: x_k \leq T}e_k\sqrt{y_k}g(x_k):g\in G_{\downarrow}\right\}.
\end{equation*}

\section{Proof of Theorem~\ref{thm:main}}
Fix $s>0$, define the truncated variables 
\begin{equation*}
   X_k^{(d)}(s):=X_k^{(d)}\ind_{\{\|X_k^{(d)}\|_2^2\geq s a(n)\}},\quad k\in\N,
\end{equation*}
the corresponding random walk
\begin{equation*}
   S_0^{(d)}(s):=0,\quad S_k^{(d)}(s):=X_1^{(d)}(s)+X_2^{(d)}(s)+\cdots+X_k^{(d)}(s),\quad k\in\N,
\end{equation*}
and the sets
\begin{equation*}
   \mathcal{M}_k^{(d)}(s):=\left\{\frac{S_0^{(d)}(s)}{\sqrt{a(n)}},\frac{S_1^{(d)}(s)}{\sqrt{a(n)}},\ldots,\frac{S_k^{(d)}(s)}{\sqrt{a(n)}}\right\},\quad k\in\N,
\end{equation*}
which we regard as a.s.~finite metric spaces endowed with the induced $\rho_2$ metric.

Define the random set
\begin{equation}\label{eq:eps_net_definition}
\mathcal{R}_{\nu}^{(s)}(T):=\left\{\sum_{k:\;x_k\leq t}e_k\sqrt{y_k} \ind_{\{y_k>s\}}: 0\leq t\leq T\right\},
\end{equation}
c.f.\ Definition~\ref{def:range}, and note that it is a.s.~finite for every fixed $s>0$, since there are a.s.~finitely many points $(x_k,y_k)$ of $\mathcal{P}$ in $[0,T]\times (s,\infty)$.

Since the Gromov--Hausdorff space is complete and separable, according to Theorem 3.2 on p.~28 in \cite{Billingsley:1999} it suffices to check that  the following three relations hold true:
\begin{equation}\label{eq:bill1}
(\mathcal{M}_{\lfloor nT\rfloor}^{(d)}(s),\rho_2)\Longrightarrow (\mathcal{R}_{\nu}^{(s)}(T),\rho_2),\quad d\to\infty,
\end{equation}
for every fixed $s>0$, weakly on the Gromov--Hausdorff space;
\begin{equation}\label{eq:bill2}
(\mathcal{R}_{\nu}^{(s)}(T),\rho_2)\longrightarrow (\mathcal{R}_{\nu}(T),\rho_2),\quad s\to 0+,
\end{equation}
a.s.\ on the Gromov--Hausdorff space; and
\begin{equation}\label{eq:bill3}
\lim_{s\to 0+}\limsup_{d\to\infty}\mmp\{d_{GH}(\mathcal{M}_{\lfloor nT\rfloor}^{(d)}(s),\mathcal{M}_{\lfloor nT\rfloor}^{(d)})>\varepsilon\}=0,
\end{equation}
for every fixed $\varepsilon>0$. The easiest relation to prove among~\eqref{eq:bill1}, \eqref{eq:bill2} and~\eqref{eq:bill3} is the second one: it follows from Lemma~\ref{lem:finite_approx_to_R(T)} below by using the obvious inequality $d_{GH}\leq d_H$.

\vspace{5mm}

\noindent
{\sc Proof of~\eqref{eq:bill1}.} The convergence stated in~\eqref{eq:bill1} is the weak convergence of probability measures on the  Gromov--Hausdorff space. A natural way to deal with it could be working with the Gromov--Hausdorff--Prohorov metric; see~\cite{Abraham+Delmas+Hoscheit:2013,Andreas+Pfaffelhuber+Winter:2009,Miermont:2009}. However, in our setting we are able to avoid this heavy machinery by an appeal to a version of the Skorokhod representation theorem.

For $k\in\N$, put $R^{(d)}_k:=\|X^{(d)}_k\|_2$. Let $M_p:=M_p([0,\infty)\times(0,\infty])$ be the space of locally finite point measures on $[0,\infty)\times(0,\infty]$, endowed with the vague topology. This space is known to be complete and separable; see Proposition~3.17 in~\cite{Resnick:2008}. Furthermore, under the assumption~\eqref{eq:regular_variation_gen}, the following convergence in distribution on $M_p$ holds true:
\begin{equation*}
   \sum_{k\geq 1}\delta_{(k/n,(R^{(d)}_k)^2/a(n))}\Longrightarrow \mathcal{P},\quad d\to\infty;
\end{equation*}
see Proposition~3.21 in the same reference.  Now we want to apply a version of Skorokhod's representation theorem, which will allow us to pass to a new probability space on which the distributional convergence above can be replaced by the a.s.\ convergence. Note that the left-hand side of the latter formula can be viewed as an image of a measurable map $\phi_n$ from $(\ell^2)^{\N}$ to $M_p$, defined by
\begin{equation*}
   \phi_n(X^{(d)}_1,X^{(d)}_2,\ldots)=\sum_{k\geq 1}\delta_{(k/n,\|X^{(d)}_k\|_2^2/a(n))}.
\end{equation*}
Thus, applying an extended version of the Skorokhod representation theorem, stated in Lemma~\ref{lem:skorokhod}, we can pass to a new probability space $(\overline{\Omega},\overline{\mathcal{F}},\overline{\mmp})$ which accommodates the following  objects:
\begin{itemize}
   \item for every $d\in\N$, a distributional copy $(\overline{X}_k^{(d)})_{k\in\N}$ of the sequence $(X_k^{(d)})_{k\in\N}$;
   \item a distributional copy $\overline{\mathcal{P}}:=\sum_k \delta_{(\overline{x}_k,\overline{y}_k)}$ of the Poisson point process $\mathcal{P}$;
\end{itemize}
such that with $\overline{R}^{(d)}_k:=\|\overline{X}^{(d)}_k\|_2$, $k\in\N$, it holds
\begin{equation}\label{eq:bill1_proof_as}
\overline{\mathcal{P}}_n:=\sum_{k\geq 1}\delta_{(k/n,(\overline{R}^{(d)}_k)^2/a(n))}\longrightarrow \sum_k \delta_{(\overline{x}_k,\overline{y}_k)},\quad \overline{\mmp}-{\rm a.s.}\quad \text{as}\quad d\to\infty.
\end{equation}

Define $\overline{\mathcal{M}}^{(d)}_{k}(s)$ and $\overline{\mathcal{R}}^{(s)}_{\nu}(T)$ in the obvious manner via $(\overline{X}_k^{(d)})_{k\in\N}$ and $\overline{\mathcal{P}}$, respectively. More precisely, put
\begin{equation*}
   \overline{\mathcal{M}}^{(d)}_{k}(s):=\left\{\sum_{j=1}^{\ell}\frac{\overline{X}_j^{(d)}\ind_{\{(\overline{R}^{(d)}_j)^2\geq s a(n)\}}}{\sqrt{a(n)}}:\ell=0,\ldots,k\right\},\quad k\in\N,
\end{equation*}
and
\begin{equation*}
   \overline{\mathcal{R}}^{(s)}_{\nu}(T):=\left\{\sum_{k:\;\overline{x}_k\leq t}e_k\sqrt{\overline{y}_k} \ind_{\{\overline{y}_k>s\}}: 0\leq t\leq T\right\}.
\end{equation*}
We shall prove that~\eqref{eq:bill1_proof_as} yields
\begin{equation}\label{eq:bill1_prob}
d_{GH}(\overline{\mathcal{M}}_{\lfloor nT\rfloor}^{(d)}(s),\overline{\mathcal{R}}_{\nu}^{(s)}(T))\overset{\overline{\mmp}}{\longrightarrow} 0,\quad d\to\infty,
\end{equation}
in the Gromov--Hausdorff space, for every fixed $s>0$. The latter is clearly sufficient for~\eqref{eq:bill1}, since
\begin{equation*}
   \mathbb{E}f(\mathcal{M}_{\lfloor nT\rfloor}^{(d)}(s))=\overline{\mathbb{E}}f(\overline{\mathcal{M}}_{\lfloor nT\rfloor}^{(d)}(s))~\longrightarrow~\overline{\mathbb{E}}f(\overline{\mathcal{R}}_{\nu}^{(s)}(T))=\mathbb{E}f(\mathcal{R}_{\nu}^{(s)}(T)),\quad d\to\infty,
\end{equation*}
for every bounded continuous $f$, with $\overline{\mathbb{E}}$ denoting the expectation with respect to $\overline{\mathbb{P}}$.

Let $\overline{\Omega}^{\prime}$ be an event of probability one on the new probability space such that~\eqref{eq:bill1_proof_as} holds for all $\overline{\omega}\in\overline{\Omega}^{\prime}$, and fix any $\overline{\omega}\in\overline{\Omega}^{\prime}$. For notational simplicity, we suppress the dependence on $\overline{\omega}$ below. According to Proposition~3.13 in~\cite{Resnick:2008}, there exist an integer $P=P(\overline{\omega})\in\N$ and an enumeration of the atoms of $\overline{\mathcal{P}}$ and $\overline{\mathcal{P}}_n$ in $[0,T]\times [s,\infty)$ such that for all sufficiently large $n\in\N$,
\begin{equation*}
   \overline{\mathcal{P}}_n(\cdot \cap ([0,T]\times [s,\infty)))=\sum_{j=1}^{P}\delta_{(k_j(n)/n,(\overline{R}^{(d)}_{k_j(n)})^2/a(n))}\quad\text{and}\quad\overline{\mathcal{P}}(\cdot \cap ([0,T]\times [s,\infty)))=\sum_{j=1}^{P}\delta_{(\overline{x}_{k_j},\overline{y}_{k_j})},
\end{equation*}
and, moreover,
\begin{equation}\label{eq:bill1_proof_pointwise}
\lim_{d\to\infty}\left(\frac{k_j(n)}{n},\frac{(\overline{R}^{(d)}_{k_j(n)})^2}{a(n)}\right)= (\overline{x}_{k_j},\overline{y}_{k_j}),\quad j=1,\ldots,P.
\end{equation}
Without loss of generality, we assume that the enumeration is chosen such that $\overline{x}_{k_1}<\overline{x}_{k_2}<\cdots<\overline{x}_{k_P}$. Then it is  clear  that
\begin{equation*}
   \overline{\mathcal{M}}^{(d)}_{\lfloor nT\rfloor}(s)=\left\{\sum_{j=1}^{\ell}\frac{\overline{X}_{k_j(n)}^{(d)}}{\sqrt{a(n)}}:\ell=0,\ldots,P\right\}.
\end{equation*}
Also,
\begin{equation*}
   \overline{\mathcal{R}}^{(s)}_{\nu}(T)=\left\{\sum_{j=1}^{\ell}e_{k_j}\sqrt{\overline{y}_{k_j}}:\ell=0,\ldots,P\right\}.
\end{equation*}
Define the bijective mapping
\begin{equation*}
   I_n:\overline{\mathcal{R}}^{(s)}_{\nu}(T)\longmapsto \overline{\mathcal{M}}^{(d)}_{\lfloor nT\rfloor}(s)
\end{equation*}
by
\begin{equation*}
   I_n\left(\sum_{j=1}^{\ell}e_{k_j}\sqrt{\overline{y}_{k_j}}\right)=\sum_{j=1}^{\ell}\frac{\overline{X}_{k_j(n)}^{(d)}}{\sqrt{a(n)}},\quad \ell=0,\ldots,P.
\end{equation*}
By Corollary~7.3.28 on p.~258 of \cite{Burago+Burago}, the Gromov--Hausdorff distance between $\overline{\mathcal{M}}^{(d)}_{\lfloor nT\rfloor}(s)$ and $\overline{\mathcal{R}}^{(s)}_{\nu}(T)$ is
bounded above by twice the distortion of the map $I_n$, that is
\begin{align*}
&\hspace{-1cm}d_{GH}(\overline{\mathcal{M}}^{(d)}_{\lfloor nT\rfloor}(s),\overline{\mathcal{R}}^{(s)}_{\nu}(T))\\
&\leq
2\sup_{0\leq \ell\leq m\leq P}\left|\Big\|I_n\left(\sum_{j=1}^{m}e_{k_j}\sqrt{\overline{y}_{k_j}}\right)-I_n\left(\sum_{j=1}^{\ell}e_{k_j}\sqrt{\overline{y}_{k_j}}\right)\Big\|_2-\Big\|\sum_{j=1}^{m}e_{k_j}\sqrt{\overline{y}_{k_j}}-\sum_{j=1}^{\ell}e_{k_j}\sqrt{\overline{y}_{k_j}}\Big\|_2\right|\\
&=2\sup_{0\leq \ell\leq m\leq P}\left|\Big\|\sum_{j=\ell+1}^{m}\frac{\overline{X}_{k_j(n)}^{(d)}}{\sqrt{a(n)}}\Big\|_2-\Big\|\sum_{j=\ell+1}^{m}e_{k_j}\sqrt{\overline{y}_{k_j}}\Big\|_2\right|.
\end{align*}
Note that
\begin{align*}
&\sup_{0\leq \ell\leq m\leq P}\left|\Big\|\sum_{j=\ell+1}^{m}\frac{\overline{X}_{k_j(n)}^{(d)}}{\sqrt{a(n)}}\Big\|_2-\Big\|\sum_{j=\ell+1}^{m}e_{k_j}\sqrt{\overline{y}_{k_j}}\Big\|_2\right|^2
\\
&~~~~~\leq
\sup_{0\leq \ell\leq m\leq P}\left|\Big\|\sum_{j=\ell+1}^{m}\frac{\overline{X}_{k_j(n)}^{(d)}}{\sqrt{a(n)}}\Big\|_2^2-\Big\|\sum_{j=\ell+1}^{m}e_{k_j}\sqrt{\overline{y}_{k_j}}\Big\|_2^2\right|
\\
&~~~~~=
\sup_{0\leq \ell\leq m\leq P}\left| \sum_{j=\ell+1}^{m} \left(\frac{(\overline{R}^{(d)}_{k_j(n)})^2}{a(n)} - \overline{y}_{k_j}\right) +\frac{1}{a(n)}\sum_{i,j\in\{\ell+1,\ldots,m\},i\neq j}\langle \overline{X}_{k_i(n)}^{(d)},\overline{X}_{k_j(n)}^{(d)}\rangle_2\right|.
\end{align*}
In the first step we used the inequality $|x-y|^2 \leq |x-y||x+y| = |x^2-y^2|$ for $x,y\geq 0$. In view of~\eqref{eq:bill1_proof_pointwise}, the first sum on  the right-hand side converges to $0$ for all $\overline{\omega}\in \overline{\Omega}^{\prime}$. Therefore, it remains  to check that
\begin{equation}\label{eq:bill1_proof4}
\frac{1}{a(n)}\sup_{0\leq \ell\leq m\leq P}\left| \sum_{i,j\in\{\ell+1,\ldots,m\},i\neq j}\langle \overline{X}_{k_i(n)}^{(d)},\overline{X}_{k_j(n)}^{(d)}\rangle_2 \right|\overset{\overline{\mmp}}{\longrightarrow}0,\quad d\to\infty.
\end{equation}
Using that
\begin{align*}
\left|\sum_{i,j\in\{\ell+1,\ldots,m\},i\neq j}\langle \overline{X}_{k_i(n)}^{(d)},\overline{X}_{k_j(n)}^{(d)}\rangle_2\right|&\leq P^2 \sup_{i,j\in\{\ell+1,\ldots,m\},i\neq j}\left|\langle \overline{X}_{k_i(n)}^{(d)},\overline{X}_{k_j(n)}^{(d)}\rangle_2\right|\\
&\leq P^2 \sup_{i,j\in\{1,\ldots,\lfloor nT\rfloor\},i\neq j}\left|\langle \overline{X}_{i}^{(d)}\ind_{\{\|\overline{X}_i^{(d)}\|_2^2\geq s a(n)\}},\overline{X}_{j}^{(d)}\ind_{\{\|\overline{X}_j^{(d)}\|_2^2\geq s a(n)\}}\rangle_2\right|,
\end{align*}
and recalling that $(\overline{X}_k^{(d)})_{k\in\N}$ is a distributional copy of $(X_k^{(d)})_{k\in\N}$, we see that~\eqref{eq:bill1_proof4} is a consequence of
\begin{equation}\label{eq:bill1_proof5}
\frac{1}{a(n)}\sup_{i,j\in\{1,\ldots,\lfloor nT\rfloor\},i\neq j}\left|\langle X_{i}^{(d)}\ind_{\{\|X_i^{(d)}\|_2^2\geq s a(n)\}},X_{j}^{(d)}\ind_{\{\|X_j^{(d)}\|_2^2\geq s a(n)\}}\rangle_2\right|\overset{\mmp}{\longrightarrow}0,\quad d\to\infty.
\end{equation}
To check the latter we note that, for every fixed $\vartheta>0$,
\begin{align*}
&\mmp\left\{\sup_{i,j\in\{1,\ldots,\lfloor nT\rfloor\},i\neq j}\left|\langle X_{i}^{(d)}\ind_{\{\|X_i^{(d)}\|_2^2\geq s a(n)\}},X_{j}^{(d)}\ind_{\{\|X_j^{(d)}\|_2^2\geq s a(n)\}}\rangle_2\right|>\vartheta a(n)\right\}\\
&~~~~~\leq T^2 n^2 \mmp\{\|X_1^{(d)}\|_2^2\geq s a(n),\|X_2^{(d)}\|_2^2\geq s a(n),|\langle X_{1}^{(d)},X_{2}^{(d)}\rangle_2|>\vartheta a(n)\}\\
&~~~~~= T^2 n^2 \mmp\{\|X_1^{(d)}\|_2^2\geq s a(n),\|X_2^{(d)}\|_2^2\geq s a(n),\|X_1^{(d)}\|_2 \|X_2^{(d)}\|_2\geq Sa(n), |\langle X_{1}^{(d)},X_{2}^{(d)}\rangle_2|>\vartheta a(n)\}\\
&~~~~~+ T^2 n^2 \mmp\{\|X_1^{(d)}\|_2^2\geq s a(n),\|X_2^{(d)}\|_2^2\geq s a(n),\|X_1^{(d)}\|_2 \|X_2^{(d)}\|_2< Sa(n), |\langle X_{1}^{(d)},X_{2}^{(d)}\rangle_2|>\vartheta a(n)\},
\end{align*}
where $S>s$ is fixed. Note that by~\eqref{eq:regular_variation_gen}
\begin{equation}
n^2\mmp\{(a(n))^{-1}(\|X_1^{(d)}\|_2^2,\|X_2^{(d)}\|_2^2)\in \cdot\}\tovague (\nu\otimes\nu)(\cdot),
\end{equation}
on $(0,\infty]\times (0,\infty]$ and, therefore,
\begin{multline*}
\lim_{d\to\infty}n^2 \mmp\{\|X_1^{(d)}\|_2^2\geq s a(n),\|X_2^{(d)}\|_2^2\geq s a(n),\|X_1^{(d)}\|_2 \|X_2^{(d)}\|_2\geq Sa(n)\}\\
=(\nu\otimes\nu)\left([s,\infty)\times [s,\infty)\cap \{(x,y)\in (0,\infty]^2:xy\geq S^2\}\right),
\end{multline*}
for all but countably many $S>s$. The right-hand side converges to zero as $S\to\infty$. On the other hand,
\begin{align*}
&\hspace{-0.5cm}n^2 \mmp\left\{\|X_1^{(d)}\|_2^2\geq s a(n),\|X_2^{(d)}\|_2^2\geq s a(n),\|X_1^{(d)}\|_2 \|X_2^{(d)}\|_2< Sa(n), |\langle X_{1}^{(d)},X_{2}^{(d)}\rangle_2|>\vartheta a(n)\right\}\\
&\leq n^2 \mmp\left\{\|X_1^{(d)}\|_2^2\geq s a(n),\|X_2^{(d)}\|_2^2\geq s a(n),\|X_1^{(d)}\|_2 \|X_2^{(d)}\|_2< Sa(n),\left|\left\langle \Theta_1^{(d)},\Theta_2^{(d)}\right\rangle_2\right|>\vartheta S^{-1}\right\}\\
&\leq n^2 \mmp\left\{\|X_1^{(d)}\|_2^2\geq s a(n),\|X_2^{(d)}\|_2^2\geq s a(n),\left|\left\langle \Theta_1^{(d)},\Theta_2^{(d)}\right\rangle_2\right|>\vartheta S^{-1}\right\}\\
&\leq C(s) \mmp\left\{\left|\left\langle \Theta_1^{(d)},\Theta_2^{(d)}\right\rangle_2\right|>\vartheta S^{-1}\Big|\|X_1^{(d)}\|_2^2\geq s a(n),\|X_2^{(d)}\|_2^2\geq s a(n)\right\},
\end{align*}
where $C(s)$ is a positive constant which depends on $s$. The last  passage is again a consequence of~\eqref{eq:regular_variation_gen}. As $d\to\infty$, the conditional probability on the right-hand side of the last  display converges to zero by~\eqref{eq:assump_orthogonal}. This completes the proof of~\eqref{eq:bill1_proof5} as well as of~\eqref{eq:bill1}.

\vspace{5mm}

\noindent
{\sc Proof of~\eqref{eq:bill3}.} It is clear that
\begin{equation*}
   d_{GH}(\mathcal{M}_{\lfloor nT\rfloor}^{(d)}(s),\mathcal{M}_{\lfloor nT\rfloor}^{(d)})\leq d_{H}(\mathcal{M}_{\lfloor nT\rfloor}^{(d)}(s),\mathcal{M}_{\lfloor nT\rfloor}^{(d)})\leq \frac{1}{\sqrt{a(n)}}\max_{k=1,\ldots,\lfloor nT\rfloor}\left\|\sum_{j=1}^{k}X_j^{(d)}\ind_{\{\|X_j^{(d)}\|_2^2\leq sa(n)\}}\right\|_2.
\end{equation*}
Further,
\begin{multline*}
\max_{k=1,\ldots,\lfloor nT\rfloor}\left\|\sum_{j=1}^{k}X_j^{(d)}\ind_{\{\|X_j^{(d)}\|_2^2\leq sa(n)\}}\right\|_2
\leq \max_{k=1,\ldots,\lfloor nT\rfloor}\left\|\sum_{j=1}^{k}\left(X_j^{(d)}\ind_{\{\|X_j^{(d)}\|_2^2\leq sa(n)\}}-\E X_j^{(d)}\ind_{\{\|X_j^{(d)}\|_2^2\leq sa(n)\}}\right)\right\|_2\\
+\max_{k=1,\ldots,\lfloor nT\rfloor}\left\|\sum_{j=1}^{k}\E \left(X_j^{(d)}\ind_{\{\|X_j^{(d)}\|_2^2\leq sa(n)\}}\right)\right\|_2=:Z_{1}^{(d)}(s)+Z_{2}^{(d)}(s).
\end{multline*}
In order to estimate $Z_{1}^{(d)}(s)$, we note that
\begin{equation*}
   \sum_{j=1}^{k}\left(X_j^{(d)}\ind_{\{\|X_j^{(d)}\|_2^2\leq sa(n)\}}-\E X_j^{(d)}\ind_{\{\|X_j^{(d)}\|_2^2\leq sa(n)\}}\right),\quad k\in\N,
\end{equation*}
is an $\R^d$-valued martingale with respect to the natural filtration of the sequence $(X_k^{(d)})_{k\in\N}$. Thus, by Jensen's inequality,
\begin{equation*}
   \left\|\sum_{j=1}^{k}\left(X_j^{(d)}\ind_{\{\|X_j^{(d)}\|_2^2\leq sa(n)\}}-\E X_j^{(d)}\ind_{\{\|X_j^{(d)}\|_2^2\leq sa(n)\}}\right)\right\|_2^2,\quad k\in\N,
\end{equation*}
is a submartingale. By Doob's maximal inequality
\begin{align*}
&\hspace{-1cm}\mmp\left\{Z_{1}^{(d)}(s)\geq 2^{-1}\varepsilon\sqrt{a(n)}\right\}=\mmp\left\{(Z_{1}^{(d)}(s))^2\geq 4^{-1}\varepsilon^2 a(n)\right\}\\
&\leq \frac{4}{\varepsilon^2 a(n)}\E \left\|\sum_{j=1}^{\lfloor nT\rfloor}\left(X_j^{(d)}\ind_{\{\|X_j^{(d)}\|_2^2\leq sa(n)\}}-\E X_j^{(d)}\ind_{\{\|X_j^{(d)}\|_2^2\leq sa(n)\}}\right)\right\|_2^2\\
&= \frac{4\lfloor nT\rfloor}{\varepsilon^2 a(n)}\E\|X^{(d)}\ind_{\{\|X^{(d)}\|_2^2\leq sa(n)\}}-\E X^{(d)}\ind_{\{\|X^{(d)}\|_2^2\leq sa(n)\}}\|_2^2\\
&\leq \frac{8\lfloor nT\rfloor}{\varepsilon^2 a(n)}\E\left(\|X^{(d)}\|_2^2\ind_{\{\|X^{(d)}\|_2^2\leq sa(n)\}}\right).
\end{align*}
Condition~\eqref{eq:regular_variation_trunc_sec_moment} implies that
\begin{equation*}
   \lim_{s\to 0+}\limsup_{d\to\infty}\mmp\left\{Z_{1}^{(d)}(s)\geq 2^{-1}\varepsilon\sqrt{a(n)}\right\}=0.
\end{equation*}
It remains to show that
\begin{equation}\label{eq:bill3_proof1}
\lim_{s\to 0+}\limsup_{d\to\infty}\frac{Z_{2}^{(d)}(s)}{\sqrt{a(n)}}=0.
\end{equation}
Note that
\begin{equation*}
   Z_{2}^{(d)}(s)=\max_{k=1,\ldots,\lfloor nT\rfloor}\left\|\sum_{j=1}^{k}\E \left(X_j^{(d)}\ind_{\{\|X_j^{(d)}\|_2^2\leq sa(n)\}}\right)\right\|_2
\leq Tn \bigl\|\E X^{(d)}\ind_{\{\|X^{(d)}\|_2^2\leq sa(n)\}}\bigr\|_2.
\end{equation*}
Thus,~\eqref{eq:bill3_proof1} follows from~\eqref{eq:centering_is_negligible}.

In Section~\ref{sec:assumptions}, we remarked that if the distribution of $\|X^{(d)}\|^2_2$ does not depend on $d$, then~\eqref{eq:regular_variation1} implies~\eqref{eq:regular_variation_trunc_sec_moment}. Here is the proof of this fact. Using p.~579 in \cite{Feller_Vol2}, Condition~\eqref{eq:regular_variation1} implies
\begin{equation*}
   \E\left(\|X^{(d)}\|_2^2\ind_{\{\|X^{(d)}\|_2^2\leq sa(n)\}}\right)~\sim~\frac{\alpha}{1-\alpha}sa(n)\mmp\{\|X^{(d)}\|_2^2\geq s a(n)\},\quad d\to\infty.
\end{equation*}
Therefore,
\begin{equation}\label{eq:regular_variation_trunc_sec_moment_verify}
\lim_{d\to\infty}\frac{n}{ a(n)}\E\left(\|X^{(d)}\|_2^2\ind_{\{\|X^{(d)}\|_2^2\leq sa(n)\}}\right)=\frac{\alpha s^{1-\alpha}}{(1-\alpha)},
\end{equation}
and the right-hand side converges to zero, as $s\to 0+$.

\section{Properties of the  crinkled subordinator}

\begin{lemma}\label{eq:compactness}
For every $T>0$, the set $\mathcal{R}_{\nu}(T)$ in Definition~\ref{def:range} is a.s.\ compact in $\ell^2$.
\end{lemma}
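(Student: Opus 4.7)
The plan is to establish total boundedness of the set $\{\mathcal{C}_\nu(t): 0 \leq t \leq T\}$ in $\ell^2$, from which a.s. compactness of $\mathcal{R}_\nu(T)$ follows, because $\mathcal{R}_\nu(T)$ is by definition the closure of this set in the complete space $\ell^2$. The crucial structural observation is that the jumps of $\mathcal{C}_\nu$ lie in pairwise orthogonal directions $e_k$, so the Pythagorean identity yields
\[
\|\mathcal{C}_\nu(t) - \mathcal{C}_\nu(s)\|_2^2 = \sum_{k:\, s < x_k \leq t} y_k, \quad 0 \leq s \leq t.
\]
This reduces $\ell^2$-control of the range of $\mathcal{C}_\nu$ to scalar control of the subordinator $\mathcal{S}_\nu$.

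I would work on the almost-sure event $\{\mathcal{S}_\nu(T) < \infty\}$, whose probability is one thanks to \eqref{eq:nu_integrable}, and fix $\varepsilon > 0$. Introduce the small-jump residual
\[
\sigma_s := \sum_{k:\, x_k \leq T,\, y_k \leq s} y_k.
\]
Campbell's formula and \eqref{eq:nu_integrable} give $\E \sigma_1 = T\int_{(0,1]} y\,\nu({\rm d}y) < \infty$, so $\sigma_1 < \infty$ a.s., and dominated convergence along $s \downarrow 0$ then yields $\sigma_s \downarrow 0$. Pick $s = s(\omega,\varepsilon) > 0$ with $\sigma_s < \varepsilon^2$. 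Since $\nu(s,\infty) < \infty$, the set $J_s := \{k: x_k \leq T,\, y_k > s\}$ is a.s. finite; set $N := |J_s|$.

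For each $t \in [0,T]$, decompose $\mathcal{C}_\nu(t) = A(t) + B(t)$ with $A(t) := \sum_{k \in J_s,\, x_k \leq t} e_k\sqrt{y_k}$. Orthogonality of the basis $(e_k)_{k\in\N}$ gives $\|B(t)\|_2^2 = \sum_{k \notin J_s,\, x_k \leq t} y_k \leq \sigma_s < \varepsilon^2$, while $A(t)$ only changes at the finitely many times $\{x_k : k \in J_s\}$ and therefore takes at most $N+1$ distinct values. Hence $\{\mathcal{C}_\nu(t): 0 \leq t \leq T\}$ is covered by $N+1$ balls of radius $\varepsilon$, proving total boundedness. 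The sole delicate ingredient is the vanishing $\sigma_s \downarrow 0$, which is precisely where the integrability assumption \eqref{eq:nu_integrable} enters essentially; all remaining steps are bookkeeping based on the orthogonality of the jump directions.
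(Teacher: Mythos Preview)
Your proof is correct and is essentially identical to the paper's argument: both reduce to total boundedness, truncate the jumps at a small level $s$, use dominated convergence (you justify finiteness via Campbell's formula, the paper leaves this implicit) to make the small-jump contribution $\sum_{k:\,x_k\leq T,\,y_k\leq s} y_k$ smaller than $\varepsilon^2$, and then observe that the finitely many large-jump partial sums form an $\varepsilon$-net by orthogonality. The only cosmetic difference is that the paper names this net $\mathcal{R}_\nu^{(\delta)}(T)$ (already introduced in \eqref{eq:eps_net_definition}), while you call it the range of $A(t)$.
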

\begin{proof}
It suffices to show that the set $\{\mathcal{C}_{\nu}(t):	0\leq t\leq T\}$ is a.s.~totally bounded in $\ell^2$, that is, for every $\varepsilon>0$ there exists an a.s.\ finite $\varepsilon$-net for $\{\mathcal{C}_{\nu}(t):	0\leq t\leq T\}$.

By Lebesgue dominated convergence theorem,
\begin{equation}\label{eq:Lebesgue_dominated_convergence}
\lim_{s\to 0+}\sum_{k:\;x_k\leq T}y_k \ind_{\{y_k\leq s\}}=0,\quad \text{a.s.}
\end{equation}
In particular, for every $\varepsilon>0$, there exists a (random) $\delta>0$ such that
\begin{equation*}
   \sum_{k:\;x_k\leq T}y_k \ind_{\{y_k\leq \delta\}}\leq \varepsilon^2.
\end{equation*}
Recall that the set in definition~\eqref{eq:eps_net_definition} is a.s.\ finite, and let us show that $\mathcal{R}_{\nu}^{(\delta)}(T)$ is the sought a.s.~finite $\varepsilon$-net for $\{\mathcal{C}_{\nu}(t):	0\leq t\leq T\}$. For every $t\in [0,T]$, we have
\begin{equation*}
   \left\|C_{\nu}(t)-\sum_{k:\;x_k\leq t}e_k\sqrt{y_k} \ind_{\{y_k>\delta\}}\right\|_2^2=\left\|\sum_{k:\;x_k\leq t}e_k\sqrt{y_k} \ind_{\{y_k\leq \delta\}}\right\|_2^2=\sum_{k:\;x_k\leq t}y_k\ind_{\{y_k\leq \delta\}}
\leq \sum_{k:\;x_k\leq T}y_k\ind_{\{y_k\leq \delta\}}\leq \varepsilon^2,
\end{equation*}
and the proof is complete.
\end{proof}

\begin{lemma}
\label{lem:finite_approx_to_R(T)}
For every $T>0$,
\begin{equation*}
   \lim_{s\to 0+}d_{H}(\mathcal{R}_{\nu}^{(s)}(T)),\mathcal{R}_{\nu}(T))=0,\quad\text{a.s.}
\end{equation*}
\end{lemma}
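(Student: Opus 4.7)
The plan is to bound $d_H(\mathcal{R}_\nu^{(s)}(T),\mathcal{R}_\nu(T))$ by a single random scalar that vanishes with $s$, via orthogonality of $(e_k)_{k\in\N}$ together with the a.s.\ convergence~\eqref{eq:Lebesgue_dominated_convergence} already established in the proof of the preceding lemma.

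First I would introduce, for every $t\in[0,T]$, the truncated curve $\mathcal{C}_\nu^{(s)}(t):=\sum_{k:\,x_k\leq t}e_k\sqrt{y_k}\ind_{\{y_k>s\}}$, whose range over $t\in[0,T]$ is exactly $\mathcal{R}_\nu^{(s)}(T)$. By orthogonality of the basis vectors,
\begin{equation*}
\|\mathcal{C}_\nu(t)-\mathcal{C}_\nu^{(s)}(t)\|_2^2=\sum_{k:\,x_k\leq t}y_k\ind_{\{y_k\leq s\}}\leq\sum_{k:\,x_k\leq T}y_k\ind_{\{y_k\leq s\}}=:\delta(s)^2
\end{equation*}
holds uniformly in $t\in[0,T]$, and $\delta(s)\to 0$ a.s.\ as $s\to 0+$ by~\eqref{eq:Lebesgue_dominated_convergence}.

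Next I would verify both Hausdorff inclusions at level $\delta(s)$. Any element of $\mathcal{R}_\nu(T)$ equals $\mathcal{C}_\nu(t)$ or $\mathcal{C}_\nu(t-)$ for some $t\in[0,T]$; its natural partner $\mathcal{C}_\nu^{(s)}(t)$, respectively $\mathcal{C}_\nu^{(s)}(t-)$, is within $\delta(s)$ by the displayed estimate (in the left-limit case, apply the bound at times $t'\uparrow t$ and pass to the limit). These partners actually lie in $\mathcal{R}_\nu^{(s)}(T)$ because, a.s., $\mathcal{P}$ has only finitely many atoms in $[0,T]\times(s,\infty]$, so $t\mapsto\mathcal{C}_\nu^{(s)}(t)$ is a c\`adl\`ag step function with finitely many jumps whose left-limit at any $t\in(0,T]$ coincides with $\mathcal{C}_\nu^{(s)}(t')$ for a suitable $t'<t$ in $[0,T]$ (and $\mathcal{C}_\nu^{(s)}(0-)=0=\mathcal{C}_\nu^{(s)}(0)$). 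Conversely, any $q=\mathcal{C}_\nu^{(s)}(t)\in\mathcal{R}_\nu^{(s)}(T)$ is within $\delta(s)$ of $\mathcal{C}_\nu(t)\in\mathcal{R}_\nu(T)$ by the same estimate. Combining gives $d_H(\mathcal{R}_\nu^{(s)}(T),\mathcal{R}_\nu(T))\leq\delta(s)\to 0$ a.s.

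I do not anticipate a serious obstacle: the entire content reduces to~\eqref{eq:Lebesgue_dominated_convergence}, and the only minor care required is the bookkeeping about left-limits of $\mathcal{C}_\nu^{(s)}(\cdot)$, handled by the a.s.\ finiteness of its jump set on $[0,T]$.
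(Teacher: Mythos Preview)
Your proposal is correct and follows essentially the same route as the paper: bound $d_H$ by $\sup_{t\in[0,T]}\|\mathcal{C}_\nu(t)-\mathcal{C}_\nu^{(s)}(t)\|_2=\sqrt{\sum_{k:x_k\leq T}y_k\ind_{\{y_k\leq s\}}}$ and invoke~\eqref{eq:Lebesgue_dominated_convergence}. The only difference is that you spell out both Hausdorff inclusions and the left-limit bookkeeping explicitly, whereas the paper compresses this into a single displayed inequality.
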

\begin{proof}
The proof follows from the inequalities
\begin{multline*}
d_{H}(\mathcal{R}_{\nu}^{(s)}(T)),\mathcal{R}_{\nu}(T))\leq \sup_{t\in [0,T]}
\left\|C_{\nu}(t)-\sum_{k:\;x_k\leq t}e_k\sqrt{y_k} \ind_{\{y_k>s\}}\right\|_2\\
=\sup_{t\in [0,T]}\sqrt{\sum_{k:\;x_k\leq t}y_k\ind_{\{y_k\leq s\}}}= \sqrt{\sum_{k:\;x_k\leq T}y_k\ind_{\{y_k\leq s\}}}.
\end{multline*}
In view of~\eqref{eq:Lebesgue_dominated_convergence}, the right-hand side converges to $0$ a.s.\ as $s\to 0+$.
\end{proof}

Let $\mathfrak{d}$ denote the Hausdorff dimension of the set $\widetilde{\mathcal{R}}_{\nu}(T)$ (the closed range of the subordinator $\mathcal S_\nu$; see~\eqref{eq:one_dimensional_range})\ regarded as a subset of $[0,\infty)$ endowed with the Euclidean distance $(t,s)\mapsto|t-s|$. A formula for $\mathfrak{d}$ can be found in Corollary~5.3 of~\cite{Bertoin}.
\begin{prop}\label{prop:Haus_dim}
The Hausdorff dimension of the set $\mathcal{R}_{\nu}(T)$ is a.s.~equal to $2\mathfrak{d}$, for every fixed $T>0$.
\end{prop}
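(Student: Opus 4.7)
The plan is to reduce the statement to a standard snowflake-type fact about Hausdorff dimension together with the explicit isometry already recorded in equation~\eqref{eq:isometry_with_one_dim}. Concretely, the map
\begin{equation*}
\varphi:(\mathcal{R}_{\nu}(T),\rho_{2})\longrightarrow\bigl(\widetilde{\mathcal{R}}_{\nu}(T),\sqrt{|\cdot-\cdot|}\bigr),\qquad \varphi(\mathcal{C}_{\nu}(t))=\mathcal{S}_{\nu}(t),\ \ \varphi(\mathcal{C}_{\nu}(t-))=\mathcal{S}_{\nu}(t-),
\end{equation*}
is a bijective isometry between $\mathcal{R}_{\nu}(T)$ (with the $\ell^{2}$-metric) and $\widetilde{\mathcal{R}}_{\nu}(T)$ endowed with the snowflake metric $\rho^{(1/2)}(t,s):=\sqrt{|t-s|}$. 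Since the Hausdorff dimension is an invariant of the isometry class of a metric space, it suffices to show
\begin{equation*}
\dim_{H}\bigl(\widetilde{\mathcal{R}}_{\nu}(T),\rho^{(1/2)}\bigr)=2\,\dim_{H}\bigl(\widetilde{\mathcal{R}}_{\nu}(T),|\cdot-\cdot|\bigr)=2\mathfrak{d}\qquad\text{a.s.}
\end{equation*}

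The second equality is merely the definition of $\mathfrak{d}$, so the real content is the snowflake identity
\begin{equation*}
\dim_{H}(X,d^{\alpha})=\alpha^{-1}\dim_{H}(X,d),\qquad 0<\alpha\leq 1,
\end{equation*}
applied with $(X,d)=(\widetilde{\mathcal{R}}_{\nu}(T),|\cdot-\cdot|)$ and $\alpha=1/2$. I would give a short self-contained verification from the definitions: a subset $U\subseteq X$ has $d^{\alpha}$-diameter $\delta$ if and only if it has $d$-diameter $\delta^{1/\alpha}$; hence, writing $\mathcal{H}^{s}_{\delta}$ for the $s$-dimensional Hausdorff premeasure at scale $\delta$,
\begin{equation*}
\mathcal{H}^{s}_{\delta}\bigl(X,d^{\alpha}\bigr)=\mathcal{H}^{\alpha s}_{\delta^{1/\alpha}}(X,d),
\end{equation*}
so letting $\delta\to 0+$ gives $\mathcal{H}^{s}(X,d^{\alpha})=\mathcal{H}^{\alpha s}(X,d)$. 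Consequently $s\mapsto \mathcal{H}^{s}(X,d^{\alpha})$ jumps from $+\infty$ to $0$ precisely at $s=\alpha^{-1}\dim_{H}(X,d)$, which is the claim.

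Combining the two ingredients yields $\dim_{H}(\mathcal{R}_{\nu}(T),\rho_{2})=2\mathfrak{d}$ almost surely, which is the assertion of the proposition. The only minor point to check is that the isometry $\varphi$ is genuinely a bijection onto all of $\widetilde{\mathcal{R}}_{\nu}(T)$ (rather than just a distance-preserving injection), but this follows from the description of the closed range in~\eqref{eq:one_dimensional_range} together with the defining relation $\|\mathcal{C}_{\nu}(t)\|_{2}^{2}=\mathcal{S}_{\nu}(t)$; no serious obstacle arises here. The dimension statement is therefore a formal consequence of material already stated in the paper, so I do not anticipate a genuine difficulty beyond being careful about left and right limits when checking bijectivity of $\varphi$.
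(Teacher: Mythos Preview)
Your proposal is correct and follows essentially the same approach as the paper: use the isometry~\eqref{eq:isometry_with_one_dim} to transfer the problem to $\widetilde{\mathcal{R}}_{\nu}(T)$ with the snowflake metric $\sqrt{|\cdot-\cdot|}$, then invoke the general fact that passing from a metric $\rho$ to $\rho^{\beta}$ multiplies Hausdorff dimension by $1/\beta$. The only difference is that you spell out the Hausdorff-premeasure computation behind the snowflake identity, whereas the paper simply states it as following from the definition.
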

\begin{proof}

By the very definition of the Hausdorff dimension, if a set $A\subset M$ has Hausdorff dimension $\mathfrak{d}$ in a metric space $(M,\rho)$ and $\beta\in (0,1]$ is a fixed parameter, then $A$ has Hausdorff dimension $\mathfrak{d}/\beta$ in the metric space $(M,\rho^{\beta})$. Applying this observation with $\beta=1/2$, we see that as a subset of $[0,\infty)$ endowed with the distance $(t,s)\mapsto\sqrt{|t-s|}$, the set $\widetilde{\mathcal{R}}_{\nu}(T)$ has Hausdorff dimension $2\mathfrak{d}$. It remains to note that isometric sets have the same Hausdorff dimensions and $(\widetilde{\mathcal{R}}_{\nu}(T),\sqrt{|\cdot-\cdot|})$ is isometric to $(\mathcal{R}_{\nu}(T),\rho_2)$; see~\eqref{eq:isometry_with_one_dim}.
\end{proof}

\begin{cor}
If $C_{\nu}$ is a crinkled $\alpha$-stable subordinator with $\alpha \in (0,1)$,  then the Hausdorff dimension of the set $\mathcal{R}_{\nu}(T)$ is a.s.~equal to $2\alpha$, for every fixed $T>0$.
\end{cor}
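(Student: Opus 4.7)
The plan is to reduce the statement directly to Proposition~\ref{prop:Haus_dim} together with the classical fact about the Hausdorff dimension of the range of an $\alpha$-stable subordinator. By definition, a crinkled $\alpha$-stable subordinator corresponds to the L\'evy measure $\nu(x,\infty) = x^{-\alpha}$, $x>0$, in which case the underlying one-dimensional subordinator $\mathcal{S}_\nu$ in the It\^o decomposition is an ordinary $\alpha$-stable subordinator. Therefore the problem splits into two independent pieces: computing the Hausdorff dimension $\mathfrak{d}$ of the closed range $\widetilde{\mathcal{R}}_\nu(T) \subset [0,\infty)$ equipped with the Euclidean distance, and then invoking Proposition~\ref{prop:Haus_dim} to conclude for $\mathcal{R}_\nu(T) \subset \ell^2$.

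First, I would recall the classical result (referenced in the text as Corollary~5.3 in \cite{Bertoin}) that for an $\alpha$-stable subordinator the closed range has a.s.\ Hausdorff dimension equal to $\alpha$. In the L\'evy-measure parametrization used here, $\nu(x,\infty)=x^{-\alpha}$ plugged into Bertoin's formula yields $\mathfrak{d}=\alpha$ a.s. This is the only quantitative input needed.

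Second, I would apply Proposition~\ref{prop:Haus_dim}, which asserts
\begin{equation*}
   \dim_H \mathcal{R}_\nu(T) \;=\; 2\mathfrak{d} \quad \text{a.s.}
\end{equation*}
Substituting $\mathfrak{d}=\alpha$ gives $\dim_H \mathcal{R}_\nu(T)=2\alpha$ a.s., which is exactly the claim. No further technical work is needed since Proposition~\ref{prop:Haus_dim} already encapsulates the change of metric from $|t-s|$ to $\sqrt{|t-s|}$ (i.e.\ the doubling of the Hausdorff dimension under a square-root snowflake transformation) and the isometry~\eqref{eq:isometry_with_one_dim} between $(\mathcal{R}_\nu(T),\rho_2)$ and $(\widetilde{\mathcal{R}}_\nu(T),\sqrt{|\cdot-\cdot|})$.

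There is essentially no obstacle here; the corollary is a one-line specialization of the preceding proposition. The only thing worth stating explicitly in the write-up is the identification of the L\'evy measure $\nu(x,\infty)=x^{-\alpha}$ with the $\alpha$-stable case, so that the reader can unambiguously apply Bertoin's formula to obtain $\mathfrak{d}=\alpha$.
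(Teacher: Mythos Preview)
Your proposal is correct and follows essentially the same approach as the paper: invoke Proposition~\ref{prop:Haus_dim} to get $\dim_H \mathcal{R}_\nu(T)=2\mathfrak{d}$ and then use the classical fact that $\mathfrak{d}=\alpha$ for an $\alpha$-stable subordinator. The only cosmetic difference is the reference chosen for $\mathfrak{d}=\alpha$ (the paper cites Blumenthal--Getoor, you cite Bertoin), which is immaterial.
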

\begin{proof}
This follows from the fact that the Hausdorff dimension of $\widetilde{\mathcal{R}}_{\nu}(T)$ is equal to $\alpha$; see Theorem 3.2 in~\cite{Blumenthal+Getoor:1960}.
\end{proof}

\begin{rem}
The same arguments in conjunction with the fact that the Wiener spiral is isometric to $[0,1]$ with the metric $(t,s)\mapsto \sqrt{|t-s|}$ demonstrate that the Hausdorff dimension of the Wiener spiral is equal to $2$.
\end{rem}




\section{Appendix}

\subsection{An extension of the Skorokhod representation theorem}

The following lemma is an extended version of the Skorokhod representation theorem. It is a light version of Theorem~1.1 in the paper~\cite{Hu+Bai:2014}; see also~\cite{Verwaat}, where this result appeared for the first time.

\begin{lemma}\label{lem:skorokhod}
Let $(M,\rho)$ and $(M_1,\rho_1)$ be two complete separable metric spaces, and $\phi_n:M\to M_1$ Borel-measurable mappings, $n\in\N$. Suppose that $(\mu_n)_{n\in\N}$ is a sequence of probability measures on $(M,\rho)$ and $\mu_0$ is a probability measure on $(M_1,\rho_1)$ such that $\mu_n\circ\phi_n^{-1}$ converges weakly to $\mu_0$ as $n\to\infty$. Then there exist a sequence of $M$-valued random variables $(X_n)_{n\in\N}$, an $M_1$-valued random variable $X_0$, all defined on a common probability space $(\overline{\Omega},\overline{\mathcal{F}},\overline{P})$, such that $X_n$ has distribution $\mu_n$ for all $n\in\N_0$, and $\phi_n(X_n)\to X_0$ a.s.\ as $n\to\infty$.
\end{lemma}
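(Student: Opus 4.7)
The plan is to reduce to the classical Skorokhod representation theorem on $M_1$ and then to lift the approximating variables back to $M$ via regular conditional distributions along the maps $\phi_n$. First, since $M_1$ is complete separable and $\mu_n\circ\phi_n^{-1}$ converges weakly to $\mu_0$, the classical Skorokhod theorem yields a probability space $(\widetilde{\Omega},\widetilde{\mathcal{F}},\widetilde{P})$ carrying random variables $Y_1,Y_2,\ldots,Y_0$ with $Y_n$ distributed as $\mu_n\circ\phi_n^{-1}$ for $n\in\N$, $Y_0$ distributed as $\mu_0$, and $Y_n\to Y_0$ almost surely in $(M_1,\rho_1)$.

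Second, I would disintegrate each $\mu_n$ along the Borel map $\phi_n$. Because $M$ is Polish, there exists a probability kernel $K_n\colon M_1\times\mathcal{B}(M)\to [0,1]$ such that
\begin{equation*}
\mu_n(A)=\int_{M_1}K_n(y,A)\,(\mu_n\circ\phi_n^{-1})({\rm d}y),\qquad A\in\mathcal{B}(M),
\end{equation*}
and $K_n(y,\phi_n^{-1}(\{y\}))=1$ for $\mu_n\circ\phi_n^{-1}$-a.e.\ $y\in M_1$; this is the standard existence result for regular conditional distributions in Polish spaces applied to the identity on $(M,\mu_n)$ conditioned on the $\sigma$-algebra $\phi_n^{-1}(\mathcal{B}(M_1))$. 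A further Polish-space fact produces a Borel randomisation $f_n\colon M_1\times [0,1]\to M$ such that if $U$ is uniform on $[0,1]$ then $f_n(y,U)$ has law $K_n(y,\cdot)$ for every $y$ for which this law is defined.

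Finally, I would enlarge the probability space to $\overline{\Omega}:=\widetilde{\Omega}\times [0,1]^{\N}$ equipped with $\overline{P}:=\widetilde{P}\otimes\lambda^{\otimes\N}$ (with $\lambda$ Lebesgue measure on $[0,1]$), lift $Y_n$ to this space, let $(U_n)_{n\in\N}$ be the coordinate projections on the $[0,1]$-factors (independent of everything else), and set
\begin{equation*}
X_n:=f_n(Y_n,U_n),\quad n\in\N, \qquad X_0:=Y_0.
\end{equation*}
Conditioning on $Y_n$ and using independence gives $X_n\sim\int K_n(y,\cdot)\,(\mu_n\circ\phi_n^{-1})({\rm d}y)=\mu_n$; the support property $K_n(y,\phi_n^{-1}(\{y\}))=1$ together with Fubini gives $\phi_n(X_n)=Y_n$ almost surely, so $\phi_n(X_n)\to Y_0=X_0$ a.s.\ in $M_1$.

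The only nontrivial ingredients are the existence of the fibrewise regular conditional distribution $K_n$ and of the measurable randomisation $f_n$; both are classical and rely precisely on $M$ being Polish and $\phi_n$ being Borel, which are the standing hypotheses. I expect this to be the main technical point, but it is standard measure theory, so the bulk of the proof is simply the bookkeeping on the product probability space $\overline{\Omega}$.
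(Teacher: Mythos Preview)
Your argument is correct and is essentially the standard route to this extended Skorokhod representation: apply the classical theorem on the target space $M_1$, then lift back along each $\phi_n$ using a regular conditional distribution and an auxiliary uniform randomisation. The two measure-theoretic facts you flag (existence of a disintegration kernel $K_n$ with $K_n(y,\phi_n^{-1}\{y\})=1$ for $\mu_n\circ\phi_n^{-1}$-a.e.\ $y$, and existence of a jointly Borel randomisation $f_n$ realising a given kernel on a Polish space) are indeed the only substantive ingredients, and both are classical.

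For comparison: the paper does not give its own proof of this lemma. It is stated in the Appendix with the remark that it is ``a light version of Theorem~1.1 in~\cite{Hu+Bai:2014}; see also~\cite{Verwaat}, where this result appeared for the first time,'' and is then used as a black box in the proof of~\eqref{eq:bill1}. Your sketch supplies exactly the argument those references contain, so there is nothing to contrast at the level of strategy; you have simply filled in what the paper outsources.
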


\subsection{A calculation for Example~\ref{ex3}.}

\begin{lemma}
\label{lem:example3_verification}
In the setting of Example~\ref{ex3}, formula~\eqref{eq:assump_orthogonal} holds true for every fixed $s>0$ and $\varepsilon>0$.
\end{lemma}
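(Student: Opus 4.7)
The plan is to exploit the heavy-tail \emph{one big jump} principle. Under the conditioning $\|X_i^{(d)}\|_2^2\geq sa(n)$, the vector $X_i^{(d)}$ is with high conditional probability dominated by a single coordinate $e_{j_i^*}$, where $j_i^*\in\{1,\ldots,d\}$ indexes the maximum of $(\xi_{i,j}^2)_{j=1}^d$. Because the two sequences $(\xi_{1,j})_j$ and $(\xi_{2,j})_j$ are independent, the dominant coordinates $j_1^*,j_2^*$ are asymptotically distinct, forcing $X_1^{(d)}$ and $X_2^{(d)}$ to be approximately orthogonal.

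Set $T_i:=\sum_{j=1}^d\xi_{i,j}^2=d^{1/\alpha}\|X_i^{(d)}\|_2^2$, $M_i:=\xi_{i,j_i^*}^2$, $R_i:=T_i-M_i$, and $t:=s(nd)^{1/\alpha}$, so that the conditioning event is $E_s=\{T_1\geq t,\,T_2\geq t\}$. The key technical step is the one-big-jump estimate: for every fixed $\eta>0$,
\[
\mmp\{R_i>\eta T_i\mid T_i\geq t\}\longrightarrow 0,\qquad d\to\infty.
\]
To prove it, first observe that $\mmp\{T_i>t\}\sim\mmp\{M_i>t\}\sim s^{-\alpha}/n$: the first asymptotic is~\eqref{eq:heyde_ex3}, and the second follows from $\mmp\{M_i>t\}=1-(1-\mmp\{\xi^2>t\})^d$ together with $\mmp\{\xi^2>t\}\sim t^{-\alpha}$. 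Consequently $\mmp\{T_i>t,\,M_i\leq t\}=o(1/n)$ (the subexponential property), so it suffices to bound $\mmp\{R_i>\eta M_i,\,M_i>t\}$. Conditionally on $M_i=m$ and on the argmax being at some coordinate $k$, the remaining $d-1$ variables are i.i.d.\ copies of $\xi^2$ restricted to $[0,m]$, and the standard computations give $\E[\xi^2\mid\xi^2\leq m]\sim m^{1-\alpha}/(1-\alpha)$ and $\E[\xi^4\mid\xi^2\leq m]\sim 2m^{2-\alpha}/(2-\alpha)$; Chebyshev's inequality then yields $\mmp\{R_i>\eta m\mid M_i=m\}\lesssim dm^{-\alpha}/\eta^2$. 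For $m\geq t$ this is $O(1/n)$ uniformly, so integrating against $\mmp\{M_i\in\mathrm{d}m\}$ produces $\mmp\{R_i>\eta M_i,\,M_i>t\}=O(1/n^2)=o(\mmp\{T_i>t\})$, as required.

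By exchangeability of the coordinates of $X_i^{(d)}$, the index $j_i^*$ is uniform on $\{1,\ldots,d\}$ conditionally on $T_i\geq t$; by independence of the two sequences this gives $\mmp\{j_1^*=j_2^*\mid E_s\}=1/d\to 0$.

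Finally, on the good event $G_\eta:=\{R_1\leq\eta T_1\}\cap\{R_2\leq\eta T_2\}\cap\{j_1^*\neq j_2^*\}$, decompose
\[
\sum_{j=1}^d\xi_{1,j}\xi_{2,j}=\xi_{1,j_1^*}\xi_{2,j_1^*}+\xi_{1,j_2^*}\xi_{2,j_2^*}+\sum_{j\notin\{j_1^*,j_2^*\}}\xi_{1,j}\xi_{2,j}.
\]
Because $j_1^*\neq j_2^*$, one has $\xi_{2,j_1^*}^2\leq R_2\leq\eta T_2$ (and symmetrically $\xi_{1,j_2^*}^2\leq\eta T_1$), so $|\xi_{1,j_1^*}\xi_{2,j_1^*}|\leq\sqrt{M_1\cdot\eta T_2}\leq\sqrt{\eta T_1 T_2}$ and likewise for the second summand; the third is at most $\sqrt{R_1R_2}\leq\eta\sqrt{T_1T_2}$ by Cauchy--Schwarz. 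Dividing through by $\sqrt{T_1T_2}$ yields $|\langle\Theta_1^{(d)},\Theta_2^{(d)}\rangle_2|\leq 2\sqrt{\eta}+\eta$ on $G_\eta$, which is at most $\varepsilon$ once $\eta$ is chosen small enough; combined with $\mmp(G_\eta^c\mid E_s)\to 0$ this finishes the proof. The main obstacle is formalizing the one-big-jump bound uniformly in $d$, since $d$ may grow much faster than $n$; the conditioning-on-$M_i$ device combined with Chebyshev cleanly sidesteps the difficulties that arise in this regime.
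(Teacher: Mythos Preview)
Your argument is correct and follows a genuinely different route from the paper's proof. The paper proceeds purely analytically: it applies conditional Markov's inequality with exponent $\alpha$ to the sum $\sum_k \xi_{1,k}\xi_{2,k}$, uses the subadditivity of $x\mapsto x^{\alpha}$ to split the moment into $d$ pieces, and then bounds the resulting truncated expectations $\E|\xi|^{\alpha}\ind_{\{\xi^2\geq A a(n)\}}$ via Karamata-type estimates. There is no probabilistic picture; the whole proof is a chain of moment bounds exploiting $\E|\xi|^{\alpha}<\infty$.

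Your approach instead makes the heuristic mechanism visible: conditioned on $\|X_i^{(d)}\|_2^2\geq sa(n)$, the vector is essentially a single coordinate direction $e_{j_i^*}$, and two such directions almost never collide. The one-big-jump estimate via Chebyshev (conditioning on $M_i=m$ and bounding the truncated second moment of $\xi^2$) is the right tool here and, as you note, works uniformly over the regime $m\geq t$ regardless of how $d$ and $n$ are coupled. The decomposition of the inner product on $G_\eta$ is clean. Two minor points worth tightening in a final write-up: the order-statistics conditioning (``the remaining $d-1$ variables are i.i.d.\ copies of $\xi^2$ restricted to $[0,m]$'') implicitly assumes the law of $\xi^2$ has no atoms, and the step $\mmp\{T_i>t,\,M_i\leq t\}=o(1/n)$ relies on Heyde's asymptotic $\mmp\{T_i>t\}\sim d\,\mmp\{\xi^2>t\}$ being sharp to first order, which is exactly what~\eqref{eq:heyde_ex3} gives. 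Neither is a gap; both are routine.

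What each approach buys: the paper's method is shorter and requires no structural insight, only the finiteness of $\E|\xi|^{\alpha}$ and standard regular-variation integrals. Your method is more illuminating and arguably more robust---the same one-big-jump skeleton would adapt to settings where the $\alpha$-moment trick is unavailable (e.g., slowly varying corrections that spoil subadditivity estimates).
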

\begin{proof}
We start by noting that~\eqref{eq:regular_variation_ex3} implies
\begin{equation*}
   \mmp\{|\xi|>t\}~\sim~t^{-2\alpha},\quad t\to\infty,
\end{equation*}
and, in particular, $\E|\xi|^{\alpha}<\infty$. Recall that $a(n) = n^{1/\alpha}$. 
By conditional Markov's inequality,
\begin{align*}
&\hspace{-1cm}\mmp\left\{\left|\sum_{k=1}^{d}\xi_{1,k}\xi_{2,k}\right|>\varepsilon d^{1/\alpha} \|X_1^{(d)}\|_2\|X_2^{(d)}\|_2\;\Big|\; \|X_1^{(d)}\|_2^2\geq s a(n),\|X_2^{(d)}\|_2^2\geq s a(n)\right\}\\
&\leq \mmp\left\{\left|\sum_{k=1}^{d}\xi_{1,k}\xi_{2,k}\right|> d^{1/\alpha} \varepsilon sa(n)\;\Big|\; \|X_1^{(d)}\|_2^2\geq s a(n),\|X_2^{(d)}\|_2^2\geq s a(n)\right\}\\
&\leq\frac{1}{d(\varepsilon s a(n))^{\alpha}}\E \left(\left|\sum_{k=1}^{d}\xi_{1,k}\xi_{2,k}\right|^{\alpha}\;\Big|\; \|X_1^{(d)}\|_2^2\geq s a(n),\|X_2^{(d)}\|_2^2\geq s a(n)\right)\\
&=\frac{1}{d(\varepsilon s a(n))^{\alpha}}\frac{\E \left(\left|\sum_{k=1}^{d}\xi_{1,k}\xi_{2,k}\right|^{\alpha}\ind_{\{\|X_1^{(d)}\|_2^2\geq s a(n),\|X_2^{(d)}\|_2^2\geq s a(n)\}}\right)}{\mmp\{\|X_1^{(d)}\|_2^2\geq s a(n),\|X_2^{(d)}\|_2^2\geq s a(n)\}}\\
&\sim \frac{s^{-2\alpha}n^2}{d(\varepsilon s a(n))^{\alpha}}\E \left(\left|\sum_{k=1}^{d}\xi_{1,k}\xi_{2,k}\right|^{\alpha}\ind_{\{\|X_1^{(d)}\|_2^2\geq s a(n),\|X_2^{(d)}\|_2^2\geq s a(n)\}}\right)\\
&=\frac nd \cdot \frac{1}{s^{3\alpha}\varepsilon^{\alpha}}\E \left(\left|\sum_{k=1}^{d}\xi_{1,k}\xi_{2,k}\right|^{\alpha}\ind_{\{\|X_1^{(d)}\|_2^2\geq s a(n),\|X_2^{(d)}\|_2^2\geq s a(n)\}}\right)
,\quad d\to\infty,
\end{align*}
where for the asymptotic  equivalence we used~\eqref{eq:heyde_ex3}. 
Using the subadditivity of $x\mapsto x^{\alpha}$, $\alpha\in (0,1)$, we conclude that
\begin{multline*}
\E \left(\left|\sum_{k=1}^{d}\xi_{1,k}\xi_{2,k}\right|^{\alpha}\ind_{\{\|X_1^{(d)}\|_2^2\geq s a(n),\|X_2^{(d)}\|_2^2\geq s a(n)\}}\right)\leq \sum_{k=1}^{d}\E |\xi_{1,k}|^{\alpha}\E|\xi_{2,k}|^{\alpha}\ind_{\{\|X_1^{(d)}\|_2^2\geq s a(n),\|X_2^{(d)}\|_2^2\geq s a(n)\}}\\
=d \left(\E |\xi_{1,1}|^{\alpha} \ind_{\{\|X_1^{(d)}\|_2^2\geq s a(n)\}}\right)^2=d \left(\E |\xi_{1,1}|^{\alpha} \ind_{\{\xi_{1,1}^2+\cdots+\xi_{1,d}^2\geq d^{1/\alpha}s a(n)\}}\right)^2.
\end{multline*}
Further, since $(a+b)^2\leq 2a^2+2b^2$, we have
\begin{align*}
&\hspace{-0.5cm}d \left(\E |\xi_{1,1}|^{\alpha} \ind_{\{\xi_{1,1}^2+\cdots+\xi_{1,d}^2\geq d^{1/\alpha}s a(n)\}}\right)^2\\
&\leq 2d \left(\E |\xi_{1,1}|^{\alpha} \ind_{\{\xi_{1,1}^2 \geq d^{1/\alpha}s a(n)/2\}}\right)^2+2d \left(\E |\xi_{1,1}|^{\alpha} \ind_{\{\xi_{1,2}^2+\cdots+\xi_{1,d}^2 \geq d^{1/\alpha}s a(n)/2\}}\right)^2\\
&=  2d \left(\E |\xi_{1,1}|^{\alpha} \ind_{\{\xi_{1,1}^2 \geq d^{1/\alpha} sa(n)/2\}}\right)^2+2d \left(\E |\xi_{1,1}|^{\alpha} \mmp\{\xi_{1,2}^2+\cdots+\xi_{1,d}^2 \geq d^{1/\alpha}s a(n)/2\}\right)^2.
\end{align*}
By Eq.~(4) in~\cite{Heyde:1968},
\begin{equation*}
   \mmp\{\xi_{1,2}^2+\cdots+\xi_{1,d}^2 \geq d^{1/\alpha}s a(n)/2\} \sim  (d-1) \cdot \mmp\{\xi^2 \geq d^{1/\alpha}s a(n)/2\}
\sim (s/2)^{-\alpha}/n. 
\end{equation*}
It remains to check that 
\begin{equation*}
   \lim_{d\to\infty} n\left(\E |\xi|^{\alpha} \ind_{\{\xi^2\geq d^{1/\alpha} s a(n)/2\}}\right)^2=0.
\end{equation*}
To this end, it is clearly sufficient to show that
\begin{equation}\label{example3_check}
   \lim_{A\to\infty}\lim_{n\to\infty} n \left(\E |\xi|^{\alpha} \ind_{\{\xi^2\geq A a(n)\}}\right)^2=0.
\end{equation}
This can be accomplished by an appeal to formula (5.21) on p.~579 in~\cite{Feller_Vol2} applied with $\beta=\alpha/2$. According to this formula,
\begin{equation*}
   \E |\xi|^{\alpha} \ind_{\{\xi^2\geq A a(n)\}}=\E (\xi^2)^{\alpha/2} \ind_{\{\xi^2\geq A a(n)\}}~\sim~\frac{4-2\alpha}{\alpha}A^{\alpha/2}(a(n))^{\alpha/2}\mmp\{\xi^2\geq A a(n)\},\quad n\to\infty.
\end{equation*}
Thus,
\begin{equation*}
   n \left(\E |\xi|^{\alpha} \ind_{\{\xi^2\geq A a(n)\}}\right)^2 
\to
\left(\frac{4-2\alpha}{\alpha}\right)^2 A^{-\alpha},\quad n\to\infty,
\end{equation*}
and~\eqref{example3_check} follows.
\end{proof}

\section*{Acknowledgments}
AM gratefully acknowledges the financial support and hospitality of the University of Angers during his stay in December 2022-March 2023. ZK was supported by the German Research Foundation under Germany's Excellence Strategy  EXC 2044 -- 390685587, Mathematics M\"unster: Dynamics - Geometry - Structure and  by the DFG priority program SPP 2265 Random Geometric Systems. This project has received funding from the European Research Council (ERC) under the European Union's Horizon 2020 research and innovation programme under the Grant Agreement No.\ 759702 and from Centre Henri Lebesgue, programme ANR-11-LABX-0020-0.

\bibliographystyle{plainnat}
\bibliography{Crinkled}

\end{document}